\theoremstyle{plain}
\newtheorem{theorem}{Theorem}[section]
\theoremstyle{remark}
\newtheorem{remark}[theorem]{Remark}
\theoremstyle{plain}
\newtheorem{lemma}[theorem]{Lemma}
\newtheorem{proposition}[theorem]{Proposition}
\newtheorem{assumption}[theorem]{Assumption}
\numberwithin{equation}{section}
\def\R{{\mathbb R}}
\newcommand{\lin}{\mathscr{L}}
\newcommand{\lf}{\mathscr{L}^\#}
\newcommand{\lfa}{\mathscr{L}^{\#,*}}
\newcommand{\lv}{{}_V\langle}
\newcommand{\lvs}{{}_{V^*}\langle}
\newcommand{\rv}{\rangle_V}
\newcommand{\rvs}{\rangle_{V^*}}
\renewcommand{\O}{\Omega}
\newcommand{\eps}{\varepsilon}
\newcommand{\lb}{\langle}
\newcommand{\rb}{\rangle}
\newcommand{\trace}{\operatorname{tr}}
\newcommand{\lt}{L^2(1+\rho)}
\newcommand{\ho}{H^1(1+\rho)}
\newcommand{\hod}{H^{1,\ast}(1+\rho)}
\begin{document}

\title[Multiscale-Analysis of stochastic RDEs]{A Multiscale-Analysis of Stochastic Bistable Reaction-Diffusion Equations}

{\author{J. Kr\"uger}
\thanks{Institut f\"ur Mathematik, Technische Universit\"at Berlin, 10623 Berlin, Germany (jkrueger@math.tu-berlin.de). The work of this author was suppported by the DFG RTG 1845.}}

{\author{W. Stannat}
\thanks{Institut f\"ur Mathematik, Technische Universit\"at Berlin, 10623 Berlin, and Bernstein Center for Computational Neuroscience Berlin, Germany (stannat@math.tu-berlin.de). The work of this author was supported by the BMBF, FKZ01GQ1001B}}
\date\today


\begin{abstract}
A multiscale analysis of 1D stochastic bistable reaction-diffusion equations 
with additive noise is carried out w.r.t. travelling waves within the variational approach to 
stochastic partial differential equations. It is shown with explicit error estimates on appropriate 
function spaces that up to lower order w.r.t. the noise amplitude, the solution can be decomposed 
into the orthogonal sum of a travelling wave moving with random speed and into Gaussian 
fluctuations. A stochastic differential equation describing the speed of the travelling wave and a 
linear stochastic partial differential equation describing the fluctuations are derived in terms of 
the coefficients. Our results extend corresponding results obtained for stochastic neural field 
equations to the present class of stochastic dynamics. 
\end{abstract}

\maketitle

\section{Introduction}
Consider the reaction-diffusion equation 
\begin{equation} 
\label{rde0} 
\partial_t v(t,x) = \nu \partial_{xx}^2 v(t,x) + bf(v(t,x))\,  , t > 0\, , x\in\Bbb R 
\end{equation} 
for strictly positive constants $\nu$, $b > 0$ and bistable reaction term   
\begin{equation}
\tag{\bf A1}
\begin{aligned}
& f (0)  = f(a) = f (1) = 0  \quad \text{ for some }  a \in (0,1) \\
& f(x) < 0 \quad \text{ for }  x \in (0,a) \, , f(x) > 0 \text{ for } x \in (a,1) \\
& f^\prime (0) < 0 , f^\prime (a) > 0, f^\prime (1) < 0\, .  
\end{aligned}
\end{equation}  
It is well-known that under the above assumptions \eqref{rde0} admits a travelling 
wave solution, i.e. a monotone increasing $C^2$-function $\hat{v}$, connecting the stable 
fixed points $0$ and $1$ of the reaction term, satisfying 
\begin{align} 
\label{tw}
c\,  \hat{v}_x = \nu\, \hat{v}_{xx} + bf (\hat{v}) 
\end{align}
for some wavespeed $c\in\R$ and boundary conditions $\hat{v} (- \infty) = 0$, $\hat{v} (+ \infty) = 
1$, see, e.g. Theorem 12 in \cite{HR}. It is easy to see that
$\hat{v} (\cdot + ct)$ and all spatial translates $\hat{v} (\cdot + x_0 + ct)$ are solutions of 
\eqref{rde0}. In the particular case of the cubic nonlinearity ${f(v) = v (1-v)(v -a)}$, equation 
\eqref{rde0} reduces to the well-known Nagumo equation (cf. \cite{Nag}), for which the travelling 
wave is explicitly given by $\hat{v}(x) = \left( 1 + e^{- \sqrt{\frac{b}{2\nu}} x}\right)^{-1}$ 
propagating along the axis with speed $c=\sqrt{2\nu b}\, (\frac12-a)$.

\medskip 
\noindent 
It can be observed in numerical simulations that the travelling wave solution persists up to 
apparently lower order fluctuations also under the impact of noisy perturbations to \eqref{rde0}. 
The term "lower-order" here is to be understood w.r.t. the order of the noise amplitude and will 
be made precise in our subsequent analysis. Even more holds for small noise amplitude: similar to 
the deterministic case, the solution of \eqref{rde0} will converge quickly to some profile of the 
type 
\begin{equation} 
\label{decomposition1}  
v(t,x) = \hat{v} (x + \gamma (t)) + u (t,x) \, , 
\end{equation}  
where $\gamma (t)$ is the random position of the wave front and $u$ denotes lower order 
fluctuations. For this reason we will call the solution $v$ also a stochastic travelling wave. 
It is the main purpose of this paper to rigorously derive a decomposition of type \eqref{decomposition1} in the case of 
noisy perturbations induced by function-space valued additive Wiener noise 
(see e.g. \cite{DaPr}), together with an identification of $\gamma$ (resp. $u$) as solution of a stochastic differential equation (resp. linear stochastic partial differential equation) accompanied with explicit error estimates in terms of $u$ in appropriate function spaces. More specifically, we consider the stochastic reaction-diffusion equation 
\begin{align}\label{RDE}
d v(t,x) = \nu \partial_{xx}^2 v(t,x) \,dt+ b f(v(t,x))\, dt + \varepsilon \,  dW(t,x),  \ \ t>0, \ \ x\in\R
\end{align}
where $W(t), \ t\in[0,T],$ denotes a $Q$-Wiener process, taking values on a 
suitable Hilbert space $H$ and $\varepsilon > 0$ will be considered to be a small parameter. 

\medskip 
\noindent 
The problem is obviously connected with the stability properties of the travelling wave solution 
$\hat{v}$ and the latter problem has been intensively studied in the deterministic setting for 
a long time, mainly based on maximum principle and comparison techniques (cf. in particular 
\cite{Fi}), and on spectral considerations (cf. \cite{Ev, Henry} and the more recent monograph 
\cite{ErmTer}). Both approaches however are not easy to carry over to the stochastic case, or 
even worse, require unnatural monotonicity conditions on the noise terms. Instead, we rather 
apply a pathwise stability analysis in the spirit of the classical Lyapunov approach to dynamical systems, first developed in \cite{Stannat12, Stannat14}.

\medskip 
\noindent 
Note that decomposition \eqref{decomposition1} is not well-posed without further assumptions, since it involves two unknowns: the position $\gamma (t)$ of the wave front and the remainder $u(t, x)$. 
In addition, there are various possibilities to define the position of the wave front. Since pointwise criteria like, e.g. level sets of $v(t,x)$ do not make much sense in the stochastic case, because of (spatially non-monotone) fluctuations, we take a minimiser 
$$ 
\gamma (t) := \mbox{ argmin}_{\gamma\in\mathbb R} \|v(t, \cdot ) - \hat{v} (\cdot + \gamma )\| 
$$ 
of the distance between $v(t, \cdot )$ and the spatial translates of $\hat{v}$ as the (in general non-unique) definition of the position of the wave-front. 

\medskip 
\noindent 
Since global minima are difficult to characterise and difficult to handle in the context of 
dynamical equations, we will require only the necessary condition $\langle v (t, \cdot ) - \hat{v} 
(\cdot + \gamma (t) ) , \hat{v}_x (\cdot + \gamma (t))\rangle = 0$ for local minima in the decomposition 
\eqref{decomposition1}. Here, $\langle  \cdot , \cdot \rangle$ denotes the scalar product of the 
underlying Hilbert space $H$. In \cite{InLa}, the authors used this approach and obtained results 
on the corresponding decomposition up to the first random time $\tau$, when the local minimum 
becomes a saddle point. Apart from the necessity of introducing the above mentioned stopping time, 
the position of the wave front will be a semi-martingale only, in particular non-differentiable. In 
addition, the decomposition obtained in this way will be non-explicit w.r.t. the small parameter 
$\varepsilon$.   

\medskip 
\noindent 
To avoid non-differentiability we will consider in this paper the following (differentiable) 
approximation $\gamma^m (t)$, defined as the solution of the following pathwise ordinary 
differential equation 
$$ 
\dot{\gamma}^m (t) = c + \langle v( t, \cdot) - \hat{v} (\cdot + \gamma^m (t)), \hat{v}_x (\cdot + 
\gamma^m (t))\rangle  
$$
for suitable initial condition and a priori chosen relaxation parameter $m > 0$. We will then show in Theorem \ref{thm:firstorder} as our first main result that $\gamma^m (t)$ admits the decomposition 
$$ 
\gamma^m (t) = ct + \varepsilon \, C^m_0 (t) +  o(\varepsilon )
$$ 
where $C^m_0 (t) = \int_0^t c_0^m (s)\, ds $, and $c^m_0$ is the unique strong solution of the stochastic ordinary differential equation
$$ 
dc^m_0 (t) =  - m c_0^m (t) \, dt + m\langle \hat{v}_x , dW(t)\rangle 
$$ 
for suitable initial condition. The fluctuations $u(t) = v(t, \cdot ) - \hat{v} (\cdot + ct + \varepsilon C_0^m (t)))$ can be represented as 
$$ 
u(t) = \varepsilon u_0^m (t) + o(\varepsilon) 
$$ 
where $u_0^m$ is the unique variational solution of the (affine) linear stochastic partial differential equation 
$$ 
du_0^m (t) = \left[ \nu\Delta u_0^m (t) + bf^\prime (\hat{v}( \cdot + ct)) u_0^m (t) - c_0^m (t) \hat{v}_x (\cdot + ct))\right]\, dt + dW(t) \, . 
$$ 

\medskip 
\noindent 
To compare our results to the analysis in \cite{InLa}, based on the variational characterisation of local minima of $\|v(t, \cdot ) - \hat{v} (\cdot + \gamma )\|$, we also consider the asymptotics  
of our decomposition for $m\to\infty$, i.e. in the limit of immediate relaxation. We will show in Lemma \ref{lem:conv} below that both processes $\gamma^m (t)$ and $u_0^m (t)$ converge as $m\to\infty$ and we will identify the limiting processes again as solutions of stochastic (partial) differential equation together with explicit error estimates in the associated decomposition \eqref{decomposition1} (see Theorem \ref{thm:limit}).

\section{Stochastic Reaction-Diffusion Equations in Weighted Sobolev Spaces}\label{sec:RDE}

\subsection{Realisation as stochastic evolution equation}

Thinking of a typical trajectory of a stochastically perturbed travelling wave one cannot expect the solution $v$ of \eqref{RDE} to take values in $L^2(\R)$, which raises the question in which sense and especially on which function space to model the above reaction-diffusion equation. If we assume the noise to be on $L^2(\R)$, what is true however, is that the difference between the solution $v$ and a deterministic reference profile $\hat{v}$ takes values in $L^2(\R)$. This motivates us to derive a decomposition of $v$ into $v=u+\hat{v}$ and investigate properties of the difference $u$ under a smallness condition on $u^0 :=v^0 - \hat{v}$. This approach to study the dynamics of stochastic travelling wave solutions has also been used in \cite{BrWe},\cite{Stannat12}, \cite{KrSt} and \cite{Lang}. In the stability analysis below it turns out, that controlling $u$ in $L^2(\R)$ with respect to the Lebesgue measure is not sufficient, but that we need an additional control with respect to the measure 
\[
\rho(x)=Z \, e^{-\frac c\nu x}\, 
\]
with a positive constant $Z>0$. This measure is naturally associated to the equation and will be derived and motivated in Section \ref{sec:frozen}. Assume $(W(t))_{t\in[0,T]}$ to be a $Q$-Wiener process on the space $L^2(1+\rho)=L^2(\R, dx)\cap L^2(\R, \rho\, dx)$. First, we rewrite \eqref{RDE} in terms of $u=v-\hat{v}$ and obtain
\begin{align}\label{RDEu}
 du(t,x) = \nu \Delta u(t,x)\,dt + b \big(f(u(t,x)+ \hat{v}(x+ct)) - f(\hat{v}(x+ct))\big)\, dt + \varepsilon dW(t,x),  \ \ t>0, \ \ x\in\R
\end{align}
This SPDE can now be formulated as a stochastic evolution equation on $L^2(1+\rho)$. Following the approach of \cite{PreRo} we choose the Gelfand triple 
\[
H^1(1+\rho)\hookrightarrow L^2(1+\rho)\hookrightarrow H^{1,\ast}(1+\rho)
\] 
with the norms
\[
\Vert u\Vert_{1+\rho}^2= \int u^2 \, (1+\rho)\, dx \quad \text{ on } L^2(1+\rho)
\]
and
\[
 \|u\|_{H^1(1+\rho)}^2 = \|u\|_{1+\rho}^2 + \| u_x\|_{1+\rho}^2   \quad \text{ on } H^1(1+\rho).
 \] 
Define the operator
\mbox{$\Delta:H^1(1+\rho)\to H^{1,\ast}(1+\rho)$} as
\[{}_{H^{1,\ast}}\lb \Delta v, u \rb_{H^{1}}= -\int_{\R}  v_x \,  \big(u \, (1+\rho)\big)_x \, dx \]
Let $G:\R\times H^{1}(1+\rho)\to H^{1,\ast}(1+\rho)$ be given by
\[{}_{H^{1}}\lb v, G(t,u)\rb_{H^{1,\ast}}  = \int_{\R} v(x)\, b \big(f(u(x) + \hat{v}(x+ct)) - f(\hat{v}(x+ct))\big) (1+\rho(x))\, dx.\]

In this terminology \eqref{RDEu} corresponds to the stochastic evolution equation 
\begin{equation}\label{eq:SEu}
\begin{aligned}
d u &= \nu \Delta u \, dt + G(t,u)\, dt + \varepsilon \, d W
\\ u(0) & = u^0.
\end{aligned}
\end{equation}
on $L^2(1+\rho)$. To prove well-posedness of this equation we impose the following additional assumptions on the global behaviour of the reaction term $f$.
We assume that the derivative $f^\prime$
is bounded from above by
\begin{equation} 
\tag{\bf B1} 
\eta_1 := \sup_{x\in\R} f^\prime (x) < \infty   \, , 
\end{equation} 
that there exists a finite positive constant $L$ such that 
\begin{equation} 
\tag{\bf B2} 
\left| f(x_1 ) - f(x_2)\right| \le L |x_1 - x_2| \left( 1 + x_1^2 + x_2^2 \right) \qquad \forall x_1 , x_2 \in \R\, , 
\end{equation} 
which is typically satisfied for polynomials of third degree with leading negative coefficient and that there exists $\eta_2$ such that  
\begin{equation}
\tag{\bf B3} 
\left| f(u+v) - f(v) - f^\prime (v) u \right|  \le \eta_2 (1 + |u|) |u|^2 \qquad \forall v\in [0,1]\, , u\in \R\, . 
\end{equation} 
Furthermore, to ensure even higher regularity of solutions of \eqref{eq:SEu}, namely weak spatial differentiability, we demand the following growth condition of the derivative $f'$: There exists a constant $\eta_3 > 0$ with
\begin{align*}
\tag{\bf B4} 
&\vert f'(u+v)\vert \leq \eta_3 (1+\vert u\vert^2)\\[3pt]
\text{and }&\vert f'(u+v)-f'(v)\vert \leq \eta_3 (\vert u\vert +\vert u\vert^2)
\qquad \forall v\in [0,1]\, , u\in \R\,.
\end{align*}

\noindent 
For simplicity of the following analysis we also assume
\begin{equation}
\tag{\bf A1}
\int_0^1 f(v)\, dv \ge  0 
\end{equation} 
This condition on the input parameter $f$ or, in other words, on the associated potential 
$F=\int f\, dv$ implies that the wave speed $c$, as part of the travelling wave solution 
$\hat{v}$, is nonnegative. To briefly illustrate this relation we multiply \eqref{tw} by 
$\hat{v}_x$ and integrate over the real line to obtain 

\begin{align*}
c\int \hat{v}_x^2 \, dx& = \nu \int \hat{v}_{xx}\, \hat{v}_x \, dx + b \int f(\hat{v})\, \hat{v}_x \, dx
= \frac \nu 2\int \frac{d}{dx} \hat{v}_x^2 \, dx + b \int_0^1 f(\hat{v})\, d\hat{v} 
=  b \int_0^1 f(\hat{v})\, d\hat{v}
\end{align*}
due to the boundary conditions of $\hat{v}$ at $\pm \infty$. This shows that the wave speed $c$ and 
the integral on the right-hand side have the same sign. Assumption (\textbf{A1}) is required in our 
analysis only to ensure that $\varrho$ is monotone decreasing. Dropping this assumption would 
require to consider both cases, monotone decreasing (resp. increasing).

\medskip
\noindent 
Under the above conditions the following existence and uniqueness result can be stated
\begin{theorem}\label{thm:solu} Assume $(\textnormal{\bf B1})-(\textnormal{\bf B3})$.
For each $T>0, \, \eps >0$ and each $\eta\in L^2(1+\rho)$ there exists a unique variational solution $u\in L^2(\O;C([0,T];\lt))\cap L^2(\O\times [0,T];\ho)$, almost surely satisfying the integral equation
\begin{align}\label{eq:solu}
u(t)&= u^0+ \int_0^t \nu \Delta u(s) \, ds + \int_0^t G(s,u(s))\, ds + \varepsilon W(t),  \ \ \ t\in [0,T] \\\notag
u^0&=\eps \eta
\end{align}
Here the Bochner integrals are defined in $\hod$. Moreover, there exists a constant\newline
\mbox{$C(T,\omega, \Vert u^0 \Vert^2_{1+\rho})>0$} such that for all $t\in[0,T]$ 
\begin{align}\label{eq:ito}
\sup_{s\leq t} \Vert u(s)\Vert_{1+\rho}^2 + \int_0^t \Vert u(s)\Vert_{H^1(1+\rho)}^2\, ds \leq C(T,\omega, \Vert u^0 \Vert^2_{1+\rho})
\end{align}
and $C(T,\omega, \Vert u^0 \Vert^2_{1+\rho})<\infty$ for almost all $\omega\in\Omega$.
\end{theorem}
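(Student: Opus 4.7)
The approach is to cast \eqref{eq:SEu} in the variational SPDE framework of \cite{PreRo} on the Gelfand triple $H^1(1+\rho)\hookrightarrow L^2(1+\rho)\hookrightarrow H^{1,*}(1+\rho)$ with drift operator $A(t,u):=\nu\Delta u + G(t,u)$. Since the diffusion coefficient is the constant embedding $\eps\,\mathrm{Id}$, the four standard hypotheses reduce to conditions on $A$ alone: hemi-continuity, (local) monotonicity, coercivity, and polynomial growth. Hemi-continuity of $\lambda\mapsto A(t,u+\lambda w)$ follows from continuity and polynomial growth of $f$ together with dominated convergence.

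The pivotal analytic identity is $\rho_x = -\tfrac{c}{\nu}\rho$, which after integration by parts gives
\[
{}_{H^{1,*}}\lb \Delta u, u\rb_{H^1} = -\n u_x\n^2_{1+\rho} + \tfrac{c^2}{2\nu^2}\n u\n^2_{\rho}\, .
\]
Combined with the one-sided Lipschitz estimate $(f(x)-f(y))(x-y)\le \eta_1(x-y)^2$, which is an immediate consequence of (\textbf{B1}), and the analogous computation for the pair $(u,v)$, this yields the (full, not merely local) monotonicity and coercivity inequalities
\[
2{}_{H^{1,*}}\lb A(t,u)-A(t,v), u-v\rb_{H^1} \le K\n u-v\n^2_{1+\rho}\, ,
\]
\[
2{}_{H^{1,*}}\lb A(t,u), u\rb_{H^1} \le -2\nu\n u_x\n^2_{1+\rho} + K\n u\n^2_{1+\rho}\, ,
\]
with $K = K(c,\nu,b,\eta_1)$. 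The $\tfrac{c^2}{2\nu^2}\n\cdot\n^2_\rho$ term produced by the drift component of the weighted Laplacian is harmless since $\n\cdot\n^2_\rho\le \n\cdot\n^2_{1+\rho}$.

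The polynomial growth bound is the only step that genuinely uses the cubic assumption (\textbf{B2}). Because $1+\rho\ge 1$, the one-dimensional Agmon inequality $\n u\n^2_\infty\le 2\n u\n_{L^2(\R)}\n u_x\n_{L^2(\R)}$ lifts to $\n u\n^2_\infty \le 2\n u\n_{1+\rho}\n u\n_{H^1(1+\rho)}$. Together with (\textbf{B2}) and $\hat v\in[0,1]$ one obtains
\[
\n G(t,u)\n^2_{H^{1,*}(1+\rho)} \le C\n G(t,u)\n^2_{L^2(1+\rho)} \le C\bigl(1+\n u\n^4_{1+\rho}\bigr)\bigl(1+\n u\n^2_{H^1(1+\rho)}\bigr)\, ,
\]
i.e.\ the polynomial growth condition with $H$-exponent $\beta=4$; combined with the trivial $\n\nu\Delta u\n^2_{H^{1,*}(1+\rho)}\le C\n u\n^2_{H^1(1+\rho)}$, the full hypothesis holds. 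The step I expect to be technically most delicate is exactly this one: the cubic $L^6$-type term produced by (\textbf{B2}) must be distributed carefully between $V$- and $H$-norms, and the key is to apply the Agmon interpolation only once so that two factors end up in the $H$-norm and one in the $V$-norm.

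Granted all hypotheses, the variational SPDE theorem yields a unique solution \mbox{$u\in L^2(\Omega; C([0,T];L^2(1+\rho)))\cap L^2(\Omega\times[0,T];H^1(1+\rho))$} with the integral identity \eqref{eq:solu} in $H^{1,*}(1+\rho)$. The pathwise estimate \eqref{eq:ito} is then obtained by applying the It\^o formula to $\n u(t)\n^2_{1+\rho}$, substituting the coercivity inequality to absorb the drift, and invoking pathwise Gronwall; the stochastic integral $\eps\int_0^t\lb u(s), dW(s)\rb_{1+\rho}$ is continuous with quadratic variation bounded by $\eps^2\,\trace Q\int_0^t\n u(s)\n^2_{1+\rho}\,ds$, so its pathwise supremum is almost-surely finite and is absorbed into the $\omega$-dependent constant $C(T,\omega, \n u^0\n^2_{1+\rho})$.
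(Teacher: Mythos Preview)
Your proposal is correct and follows essentially the same route as the paper: verify the hemicontinuity, (one-sided) monotonicity, coercivity, and polynomial growth hypotheses of the Liu--R\"ockner/Pr\'ev\^ot--R\"ockner variational framework for $A(t,u)=\nu\Delta u + G(t,u)$, using (\textbf{B1}) for monotonicity/coercivity and (\textbf{B2}) together with the one-dimensional Sobolev embedding for the growth bound. Your exact identity ${}_{H^{1,*}}\langle \Delta u,u\rangle_{H^1}=-\|u_x\|_{1+\rho}^2+\tfrac{c^2}{2\nu^2}\|u\|_\rho^2$ is in fact what underlies the paper's coercivity estimate, which is written there as the coarser inequality ${}_{H^{1,*}}\langle \nu\Delta u,u\rangle_{H^1}\le -\nu\|u\|_{H^1(1+\rho)}^2+(\nu+\tfrac{c^2}{2\nu})\|u\|_{1+\rho}^2$.

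The only point of divergence is the derivation of the pathwise bound \eqref{eq:ito}. The paper first passes to expectations, applies Gronwall to $E[\|u(t)\|_{1+\rho}^2]$, and then controls $E[\sup_t|M(t)|]$ via the Burkholder--Davis--Gundy inequality, obtaining an \emph{integrable} bound on $\sup_t\|u(t)\|_{1+\rho}^2+\int_0^T\|u\|_{H^1(1+\rho)}^2$ from which almost-sure finiteness follows. You instead run Gronwall pathwise after absorbing $\sup_{s\le T}|M(s)|$ into the constant. This is legitimate because the existence theorem already gives $u\in L^2(\Omega\times[0,T];L^2(1+\rho))$, so the quadratic variation $[M]_T$ is integrable and $M^*:=\sup_{s\le T}|M(s)|$ is almost surely finite by Doob's inequality; your argument is thus more direct but yields only the qualitative almost-sure finiteness stated in the theorem, whereas the paper's BDG route additionally produces an explicit $L^1(\Omega)$ bound. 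A small cosmetic remark: the quadratic variation of $M$ is $4\eps^2\int_0^t\langle u(s),Qu(s)\rangle\,ds\le 4\eps^2\|Q\|_{\mathscr L}\int_0^t\|u(s)\|_{1+\rho}^2\,ds$, so $\trace Q$ in your bound should be $\|Q\|_{\mathscr L}$ (or any upper bound for it).
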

\begin{proof}
We show that the coefficients in \eqref{eq:solu} satisfy the conditions of Theorem 1.1 in \cite{LiuRo}. Let $u,w\in \ho$. $G$ can be realised as a continuous mapping $\ho \to \hod$: Using property $({\bf B2})$ and the elementary estimate $\Vert u\Vert_\infty \leq \Vert u\Vert_{\ho}$ one obtains
\begin{align*}
{}_{H^{1,\ast}}\lb G(t,u), w \rb_{H^{1}} &= b\, \int \big(f(u+\hat{v}(\cdot+ct))-f(\hat{v}(\cdot+ct))\big) \, w\,(1+\rho) \, dx\\
&\leq bL \int \vert u\vert (3+2u^2) \vert w\vert (1+\rho)\, dx
\leq bL (3+2\Vert u\Vert_{\ho}^2) \Vert u\Vert_{\ho} \Vert w\Vert_{\ho} 
\end{align*}
The drift and diffusion operators are clearly hemicontinuous. For showing coercivity we estimate
\begin{align*}
{}_{H^{1,\ast}}\lb \nu\Delta u, u\rb_{H^{1}} = -\nu \int u_x^2 (1+\rho) \, dx - \nu \int u_x u \, \rho_x \, dx
\leq -\nu \Vert u\Vert_{\ho}^2+\left(\nu+\frac {c^2}{2\nu}\right)\Vert u\Vert_{1+\rho}^2
\end{align*}
and
\begin{align*}
{}_{H^{1,\ast}}\lb G(t,u),u\rb_{H^{1}} = b\int \big(f(u+\hat{v}(\cdot+ct))-f(\hat{v}(\cdot+ct))\big)\, u \,(1+\rho)\, dx \leq b\, \eta_1 \, \Vert u\Vert^2_{1+\rho}
\end{align*}
since $f$ is one-sided Lipschitz continuous, i.e. $(f(x)-f(y))(x-y)\leq \eta_1 (x-y)^2$ for all $x,y\in \R$ using $({\bf B1})$. Weak monotonicity of the linear part of the drift operator is covered by the coercivity condition. For the nonlinear part with the  one-sided Lipschitz estimate from above one obtains for $u_1, u_2\in H^1(1+\rho)$
\begin{align*}
{}_{H^{1,\ast}}\lb G(t,u_1)-G(t,u_2), u_1-u_2\rb_{H^{1}} &  = b \int \big(f(u_1+\hat{v}(\cdot+ct))-f(u_2+\hat{v}(\cdot+ct))\big) (u_1-u_2) \, (1+\rho) \, dx \\
&\leq b\, \eta_1 \int (u_1-u_2)^2 (1+\rho) \, dx = b\, \eta_1 \Vert u_1-u_2\Vert^2_{1+\rho}
\end{align*}
Furthermore, $G$ is of admissible growth: Let $w\in \ho$ with \mbox{$\Vert w\Vert_{\ho}\leq 1$}. Applying condition $({\bf B2})$ yields
\begin{align*}
{}_{H^{1,\ast}}\lb G(t,u),w\rb_{H^{1}} &\leq bL\int \vert u\vert (3+2u^2)\vert w\vert (1+\rho)\, dx\leq 3bL \Vert u\Vert_{1+\rho} +2bL \Vert u\Vert_{\ho}\Vert u\Vert_{1+\rho}^2\\
&\leq 3bL \Vert u\Vert_{\ho} (1+\Vert u\Vert_{1+\rho}^2)
\end{align*}

For the second part of the statement an application of It\^o 's formula as stated in \cite[Theorem 4.2.5]{PreRo} together with the coercivity of the drift yields for all $t\in[0,T]$
\begin{align}\label{eq:est}\notag
\Vert u(t)\Vert_{1+\rho}^2 &= \Vert u^0\Vert_{1+\rho}^2 + \int_0^t \Big( 2\, {}_{H^{1,\ast}}\lb \nu\Delta u(s)+G(s,u(s)), u(s)\rb_{H^1} \, ds + \Vert \sqrt{Q}\Vert_{L_2(L^2(1+\rho))}^2\Big)\, ds\\\notag
&\quad +2 \int_0^t \lb u(s), dW(s)\rb\\
&\leq \Vert u^0\Vert_{1+\rho}^2 -2\nu \int_0^t \Vert u(s)\Vert_{H^1(1+\rho)}^2\, ds + 2\left(\nu+\frac{c^2}{2\nu} + b\eta_1\right)\int_0^t \Vert u(s)\Vert_{1+\rho}^2\, ds \\\notag
&\quad+ T \trace(Q) + M(t)
\end{align} 
where $M(t),t\in[0,T],$ denotes the martingale part and $\Vert \cdot \Vert_{L_2(L^2(1+\rho))}$ the Hilbert-Schmidt norm on $L^2(1+\rho)$. In particular, setting $\theta:= 2\left(\nu+\frac{c^2}{2\nu} + b\eta_1\right)$ this implies
\begin{align*}
\Vert u(t)\Vert_{1+\rho}^2 &\leq \Vert u^0\Vert_{1+\rho}^2 + \theta \int_0^t \Vert u(s)\Vert_{1+\rho}^2\, ds + T \trace(Q) + M(t)
\end{align*}  
such that Gronwall's lemma allows us to bound the expectation of the left-hand side by
\begin{align}\label{gron}
E\left[\Vert u(t)\Vert_{1+\rho}^2\right]\leq e^{\theta t\,}(\Vert u^0\Vert_{1+\rho}^2 + T \trace(Q))\, .
\end{align}
Taking first the supremum and then the expectation of all terms in \eqref{eq:est} and inserting \eqref{gron} we obtain the estimate
\begin{align*}
&E\left[\sup_{t\in[0,T] }\Vert u(t)\Vert_{1+\rho}^2 +2\nu  \int_0^T
 \Vert u(s)\Vert_{H^1(1+\rho)}^2\, ds\right] \\
 &\leq \Vert u^0\Vert_{1+\rho}^2  + \theta \int_0^T E\left[\Vert u(s)\Vert_{1+\rho}^2\right]\, ds + T \trace(Q) 
 + E\left[\sup_{t\in[0,T]} \vert M(t)\vert\right]\\
 &\leq \Vert u^0\Vert_{1+\rho}^2  + e^{\theta T} \, (\Vert u^0\Vert_{1+\rho}^2 + T \trace(Q)) + T \trace(Q) 
 + E\left[\sup_{t\in[0,T]} \vert M(t)\vert\right]
\end{align*}
A control of the last expectation in terms of the martingale's quadratic variation $[M]_t, t\in[0,T], $ is provided by the Burkholder-Davis-Gundy inequality
\begin{align*}
E\left[\sup_{t\in[0,T]} \vert M(t)\vert\right] &\leq C E\left[ [M]_t^{\frac 12}\right]= 2C E\left[\left(\int_0^t \lb u(s), Q u(s)\rb \, ds\right)^{\frac 12}\right]\\
& \leq 2C{\Vert Q\Vert^{\frac 12}_{\mathscr{L}(L^2(1+\rho))}}\,  E\left[\left(\int_0^t \Vert u(s)\Vert_{1+\rho}^2 \, ds\right)^{\frac 12}\right]\\
& \leq 2 C{T^{\frac 12}\, \Vert Q\Vert^{\frac 12}_{\mathscr{L}(L^2(1+\rho))}}\,  E\left[\sup_{t\in[0,T]} \Vert u(t)\Vert_{1+\rho}\right]\\
&\leq  2 C {T^{\frac 12}\, \Vert Q\Vert^{\frac 12}_{\mathscr{L}(L^2(1+\rho))}}\,  E\left[\sup_{t\in[0,T]} \Vert u(t)\Vert_{1+\rho}^2\right]^{\frac 12}\\
&\leq 2 C {T^{\frac 12}\, \Vert Q\Vert^{\frac 12}_{\mathscr{L}(L^2(1+\rho))}} \, e^{\frac \theta 2 T\,}(\Vert u^0\Vert_{1+\rho}^2 + T \trace(Q))^{\frac 12}
\end{align*}
Summarizing the above estimates, there exists a positive constant $C(T,\Vert u^0\Vert^2_{1+\rho})$ such that 
\begin{align*}
E\left[\sup_{t\in[0,T] }\Vert u(t)\Vert_{1+\rho}^2 +2\nu  \int_0^T
 \Vert u(s)\Vert_{H^1(1+\rho)}^2\, ds\right] \leq C(T,\Vert u^0\Vert^2_{1+\rho})
\end{align*}
In particular, there exists a path-dependent constant $C(T,\omega, \Vert u^0\Vert^2_{1+\rho})$ with
\begin{align*}
\sup_{t\in[0,T] }\Vert u(t)\Vert_{1+\rho}^2 +2\nu  \int_0^T
 \Vert u(s)\Vert_{H^1(1+\rho)}^2\, ds \leq C(T, \omega, \Vert u^0\Vert^2_{1+\rho})
\end{align*}
and $C(T,\omega, \Vert u^0\Vert^2_{1+\rho})<\infty$ for $P$-almost every $\omega\in\Omega$.
\end{proof}

\noindent 
From the above decomposition it follows that $v := u+ \hat{v}$ is a solution of the original reaction-diffusion equation \eqref{RDE}.

\subsection{Continuity of the solution in $H^1(1+\rho)$}
We will now show that, assuming higher spatial regularity of the noise as well as an additional growth condition of $f'$, the unique solution $u$ of \eqref{eq:solu} is even continuous with values in $H^1(1+\rho)$.

\begin{assumption}\label{ass:noise}
Let $\sqrt{Q}$ be a Hilbert-Schmidt operator from $L^2(1+\rho)$ to $H^1(1+\rho)$.
\end{assumption}

\begin{remark}\label{lem:rho} In the following analysis we will frequently consider translations 
of the measure $\rho$ w.r.t. deterministic as well as stochastic shifts, for which reason the following two properties of $\rho$ will become important: 
\begin{enumerate}[(i)]
\item $\rho$ is decreasing, i.e. $\rho(x+y)\leq\rho(x)$ for all $y>0$. 
\item $\rho$ is of (at most) exponential growth, i.e., $\rho(x-\xi)\leq e^{M\vert \xi\vert}\rho(x)$ 
for $M= \frac c\nu$ and all $\xi\in\R$\, . 
\end{enumerate}
\end{remark}

\begin{lemma}\label{lem:grad} Assume \textnormal{({\bf B4})}. Let $p_t = e^{t\nu\Delta}$, $t\ge 0$, denote the semigroup generated by $\nu\Delta$. Then there exists a finite constant $C=C(T, \Vert \hat{v}_x\Vert_\infty)$ such that for all $u\in H^1(1+\rho)$ and for all $s\leq t$ the following bound holds: 
\begin{align*}
\Vert \partial_x p_{t-s}G(s,u)\Vert_{1+\rho}^2 \leq C (1+\Vert u\Vert_{1+\rho}^2 \Vert u_x\Vert_{1+\rho}^2) \, (\Vert u\Vert_{1+\rho}^2+\Vert u_x\Vert_{1+\rho}^2)\, . 
\end{align*}
\end{lemma}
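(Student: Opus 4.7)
The plan is to first exchange the spatial derivative with the heat semigroup and reduce the claim to a bound on $\Vert \partial_x G(s,u)\Vert_{1+\rho}^2$. Since $p_{t-s}$ is the convolution semigroup generated by $\nu\Delta$ on $\R$, it commutes with $\partial_x$, so $\partial_x p_{t-s}G(s,u) = p_{t-s}\partial_x G(s,u)$. By the chain rule applied to $G(s,u) = b(f(u+\hat v(\cdot+cs))-f(\hat v(\cdot+cs)))$,
\[
\partial_x G(s,u) = b\bigl[f'(u+\hat v(\cdot+cs))\,u_x + \bigl(f'(u+\hat v(\cdot+cs))-f'(\hat v(\cdot+cs))\bigr)\,\hat v_x(\cdot+cs)\bigr],
\]
which makes the two structural assumptions in $({\bf B4})$ directly applicable.

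Next I would show that $p_{t-s}$ is bounded on $L^2(1+\rho)$ uniformly in $0\leq s\leq t\leq T$. By Jensen's inequality applied to the probability kernel $p_{t-s}$ together with the symmetry of the heat kernel,
\[
\Vert p_{t-s}g\Vert_{1+\rho}^2\leq \int g^2\,p_{t-s}(1+\rho)\,dy.
\]
A direct Gaussian computation yields $p_t\rho = e^{c^2t/\nu}\rho$, hence $p_{t-s}(1+\rho)\leq e^{c^2T/\nu}(1+\rho)$ and therefore $\Vert p_{t-s}g\Vert_{1+\rho}^2\leq e^{c^2T/\nu}\Vert g\Vert_{1+\rho}^2$. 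It then suffices to control $\Vert \partial_x G(s,u)\Vert_{1+\rho}^2$ by the claimed right-hand side.

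Inserting $({\bf B4})$ pointwise gives
\[
|\partial_x G(s,u)|^2\leq 2b^2\eta_3^2\bigl[(1+u^2)^2 u_x^2 + (|u|+u^2)^2 \hat v_x^2(\cdot+cs)\bigr].
\]
Integrating against $(1+\rho)\,dx$, expanding the squares and pulling out $\Vert\hat v_x\Vert_\infty^2$ on the second term leaves contributions of the form $\Vert u_x\Vert_{1+\rho}^2$, $\Vert u\Vert_\infty^4\Vert u_x\Vert_{1+\rho}^2$, $\Vert u\Vert_{1+\rho}^2$ and $\int u^4(1+\rho)\,dx$. To dispose of the $L^\infty$-powers I would invoke the one-dimensional Agmon inequality, which since $L^2(1+\rho)\hookrightarrow L^2(\R)$ reads $\Vert u\Vert_\infty^2\leq 2\Vert u\Vert_{1+\rho}\Vert u_x\Vert_{1+\rho}$; this yields $\Vert u\Vert_\infty^4\Vert u_x\Vert_{1+\rho}^2\leq 4\Vert u\Vert_{1+\rho}^2\Vert u_x\Vert_{1+\rho}^4$ as well as $\int u^4(1+\rho)\,dx\leq 2\Vert u\Vert_{1+\rho}^3\Vert u_x\Vert_{1+\rho}$. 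A single application of Young's inequality $2ab\leq a^2+b^2$ with $a=\Vert u\Vert_{1+\rho}$ and $b=\Vert u\Vert_{1+\rho}^2\Vert u_x\Vert_{1+\rho}$ then collects every contribution into $C(1+\Vert u\Vert_{1+\rho}^2\Vert u_x\Vert_{1+\rho}^2)(\Vert u\Vert_{1+\rho}^2+\Vert u_x\Vert_{1+\rho}^2)$.

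The main obstacle I anticipate is purely algebraic: the quartic and sextic powers arising from $(1+u^2)^2$ and $(|u|+u^2)^2$ have to be distributed precisely between the two factors on the right-hand side, and a naive Sobolev bound $\Vert u\Vert_\infty^2\leq C(\Vert u\Vert_{1+\rho}^2+\Vert u_x\Vert_{1+\rho}^2)$ would leave a stray $\Vert u\Vert_{1+\rho}^4$ contribution that the prescribed product form cannot absorb. For this reason the multiplicative version of Agmon, combined with Young's inequality, is essential.
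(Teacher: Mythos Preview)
Your proposal is correct and follows essentially the same route as the paper: commute $\partial_x$ with the heat semigroup, bound $\partial_x G(s,u)$ pointwise via $({\bf B4})$, use Jensen together with the symmetry of $p_{t-s}$ and the explicit Gaussian identity $p_t\rho=e^{c^2t/\nu}\rho$ to reduce to an unweighted estimate, and close with the multiplicative Agmon inequality $\Vert u\Vert_\infty^2\le 2\Vert u\Vert_{1+\rho}\Vert u_x\Vert_{1+\rho}$. The only cosmetic difference is that the paper first factors the integrand as $(1+\Vert u\Vert_\infty^4)(u^2+u_x^2)$ and then applies Agmon once, whereas you apply Agmon to the individual quartic and sextic contributions and regroup via Young; both arrive at the stated product form.
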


\begin{proof}
Let $u\in C_c^1(\R)$. Then, \textnormal{({\bf B4})} leads to  
\begin{align*}
&\vert \partial_x p_{t-s} G(s,u)\vert = \vert p_{t-s}\left(\partial_x G(s,u)\right)\vert \\
&\leq p_{t-s}\big\vert f'(u+\hat{v}(\cdot +cs))\, u_x + (f'(u+\hat{v}(\cdot +cs))-f'(\hat{v}(\cdot +cs)))\, \hat{v}_x(\cdot+cs)\big\vert\\
&\leq p_{t-s}\big[\eta_3 \, (1+u^2)\vert u_x\vert + \eta_3 \, (\vert u\vert + u^2)\,  \hat{v}_x(\cdot+cs)\big]
\end{align*}
It is well-known that $\nu\Delta$ generates the Gaussian semigroup
\begin{align*}
p_tf(x)=\frac 1{\sqrt{4\pi\nu t}} \int f(x+y) e^{-\frac{y^2}{4\nu t} }\, dy\,. 
\end{align*}
Applying it to the measure $1+\rho$ we obtain
\begin{align}\label{ptrho}
p_t (1+\rho) (x) = 1+\frac{Z e^{-\frac c\nu x}}{\sqrt{4\pi\nu t}} \int e^{-\frac {y^2}{4\nu t}-\frac {c}{\nu} y} \, dy 
= 1+\frac{Z e^{-\frac c\nu x + \frac{c^2}{\nu} t }}{\sqrt{4\pi \nu t}} \int e^{-\frac{(y+ 2c t)^2}{4\nu t}} \, dy 
= 1+Z e^{-\frac c\nu x + \frac{c^2}{\nu} t }\, . 
\end{align}

Hence, there exists a constant $C=C(\Vert \hat{v}_x\Vert_\infty)$ such that 
\begin{align*}
\int \vert \partial_x p_{t-s} G(s,u)\vert^2 \, (1+\rho)\, dx
&\leq C \int p_{t-s}\big(u_x^2+u^4u_x^2 +u^2+u^4\big)\, (1+\rho)\, dx\\
&= C \int \big(u_x^2+u^4u_x^2 +u^2+u^4\big)\, (1+p_{t-s}\rho)\, dx\\
&\leq C\, e^{\frac{c^2}{\nu}(t-s)}\int \big(u_x^2+u^4u_x^2 +u^2+u^4\big)\, (1+\rho)\, dx\\
&\leq C \, e^{\frac{c^2}{\nu}T}(1+\Vert u\Vert_{L^\infty(dx)}^4)\int (u_x^2+u^2)\,(1+\rho)\, dx
\end{align*}
Now, with the elementary estimate $\Vert u\Vert_{L^\infty(dx)}^2\leq 2 \, \Vert u \Vert_{1+\rho}\Vert u_x\Vert_{1+\rho}$ there exists a positive constant  $C=C(T, \Vert \hat{v}_x\Vert_\infty)$ such that
\begin{align*}
\Vert \partial_x p_{t-s}G(s,u)\Vert_{1+\rho}^2 &\leq C (1+\Vert u\Vert_{1+\rho}^2 \Vert u_x\Vert_{1+\rho}^2) \, (\Vert u\Vert_{1+\rho}^2+\Vert u_x\Vert_{1+\rho}^2)\, . 
\end{align*}
\end{proof}

\noindent With this at hand we are able to formulate the following regularity statement:
\begin{proposition}\label{prop:ex}
Under Assumption \ref{ass:noise} for each $T>0$ and each $\eta\in H^1(1+\rho)$ there exists a unique variational solution $u\in C([0,T];H^1(1+\rho))$ of equation \eqref{eq:solu}. 
\end{proposition}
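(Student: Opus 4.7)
The plan is to work with the mild formulation of \eqref{eq:solu},
\begin{equation*}
u(t) = p_t u^0 + \int_0^t p_{t-s} G(s,u(s))\, ds + \varepsilon \int_0^t p_{t-s}\, dW(s),
\end{equation*}
and to verify separately that each of the three summands admits a version continuous in $H^1(1+\rho)$. Uniqueness of variational solutions in $L^2(1+\rho)$ from Theorem \ref{thm:solu} will then identify this mild solution with the sought-after $u$, upgrading its regularity.

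First, I would exploit the explicit identity \eqref{ptrho} together with Jensen's inequality $(p_t f)^2 \le p_t(f^2)$ to obtain $\|p_t f\|_{1+\rho}^2 \le \int f^2 p_t(1+\rho)\, dx \le e^{c^2 T/\nu}\|f\|_{1+\rho}^2$, so that $(p_t)$ extends to a bounded $C_0$-semigroup on $L^2(1+\rho)$. Since $\partial_x$ commutes with $p_t$, the same holds on $H^1(1+\rho)$, and in particular $t \mapsto p_t u^0$ is continuous in $H^1(1+\rho)$ whenever $u^0 = \eps\eta \in H^1(1+\rho)$.

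For the deterministic convolution, Lemma \ref{lem:grad} combined with the pathwise bound \eqref{eq:ito} gives
\begin{equation*}
\int_0^t \|\partial_x p_{t-s} G(s,u(s))\|_{1+\rho}\, ds \le C(T,\omega)\int_0^t \bigl(1 + \|u_x(s)\|_{1+\rho}\bigr)\, ds < \infty \quad \text{a.s.},
\end{equation*}
so the integral is well-defined as a Bochner integral in $H^1(1+\rho)$. Continuity in $t$ follows by the standard splitting $\int_0^t = \int_0^{t-\delta} + \int_{t-\delta}^t$: strong continuity of $p_\cdot$ on $H^1(1+\rho)$ combined with dominated convergence handles the first integral, while the second vanishes as $\delta \to 0$ by the uniform integrability just established.

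Finally, under Assumption \ref{ass:noise} the $Q$-Wiener process itself has trajectories in $C([0,T]; H^1(1+\rho))$; since $(p_t)$ restricts to a bounded $C_0$-semigroup on $H^1(1+\rho)$ with $\sqrt{Q}$ Hilbert--Schmidt into this space, the factorisation method of Da Prato--Zabczyk (cf.\ \cite{DaPr}) produces a continuous $H^1(1+\rho)$-valued version of the stochastic convolution. Assembling the three pieces yields a mild solution in $C([0,T]; H^1(1+\rho))$ almost surely, and uniqueness from Theorem \ref{thm:solu} identifies it pathwise with the variational solution. The principal obstacle is the nonlinear, non-Lipschitz character of $G$ in $H^1(1+\rho)$, which precludes a direct fixed-point argument at that regularity level; Lemma \ref{lem:grad} circumvents this by controlling $\partial_x p_{t-s} G(s,u)$ through lower-order norms of $u$ that are already known to be pathwise finite.
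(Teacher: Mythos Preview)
Your approach is essentially the paper's: take the variational solution from Theorem~\ref{thm:solu}, write it in mild form via $p_t=e^{t\nu\Delta}$, and verify that each of the three summands lies in $C([0,T];H^1(1+\rho))$ using Lemma~\ref{lem:grad} for the nonlinear convolution and Assumption~\ref{ass:noise} for the stochastic one. One small correction: Lemma~\ref{lem:grad} combined with \eqref{eq:ito} yields $\|\partial_x p_{t-s}G(s,u(s))\|_{1+\rho}\le C(T,\omega)\bigl(1+\|u_x(s)\|_{1+\rho}^2\bigr)$, not $\bigl(1+\|u_x(s)\|_{1+\rho}\bigr)$; the conclusion is unaffected since $\int_0^T\|u_x(s)\|_{1+\rho}^2\,ds<\infty$ by \eqref{eq:ito}. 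The only technical difference is that the paper squares the mild identity, arrives at $\|u_x(t)\|_{1+\rho}^2\le C+C'\int_0^t\|u_x(s)\|_{1+\rho}^4\,ds$, and closes via Gronwall using the known bound on $\int_0^T\|u_x\|_{1+\rho}^2\,ds$, whereas your $L^1$-in-time estimate on $\|\partial_x p_{t-s}G\|_{1+\rho}$ sidesteps that bootstrap.
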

\begin{proof}
Let $u$ be the unique variational solution in \mbox{$L^2(\O;C([0,T];\lt))\cap L^2(\O\times (0,T);\ho)$} from Theorem \ref{thm:solu}. Using the semigroup $p_t = e^{t\nu\Delta}$, $t\ge 0$, $u$ allows for the mild solution representation 
\begin{align*} 
u(t)=p_t u^0 + \int_0^t p_{t-s}G(s, u(s))\, ds + \int_0^t p_{t-s}dW(s)
\end{align*}
We will show that indeed $u$ is differentiable in space and the gradient takes values in $\lt$: For every $t\in[0,T]$ with Lemma \ref{lem:grad}
\begin{align*}
&\Vert u_x(t)\Vert_{1+\rho}^2 \leq 2\Vert p_t u^0_x\Vert_{1+\rho}^2 + 4 \int_0^t \Vert \partial_x p_{t-s}G(s,u(s))\Vert_{1+\rho}^2 \, ds + 4 \, \Big\Vert \partial_x \int_0^t p_{t-s}dW(s)\Big\Vert_{1+\rho}^2\\
&\leq 2\Vert p_t u^0_x\Vert_{1+\rho}^2 + 4 C \int_0^t  (1+\Vert u(s)\Vert_{1+\rho}^2 \Vert u_x(s)\Vert_{1+\rho}^2) \, (\Vert u(s)\Vert_{1+\rho}^2+\Vert u_x(s)\Vert_{1+\rho}^2) \, ds\\
&\quad + 4 \Big\Vert \partial_x \int_0^t p_{t-s}dW(s)\Big\Vert_{1+\rho}^2 =: I+II+III, \text{ say.}
\end{align*}
Clearly, 
\[
I\leq \sup_{t\in[0,T]} \Vert p_t\Vert^2 \Vert u^0_x\Vert_{1+\rho}^2=:C_1(T,\Vert u^0_x\Vert_{1+\rho})
\]
For uniformly bounding $III$ note that for a $Q$-Wiener process $W$ on $\ho$ we know that
\[
\int_0^t p_{t-s}dW(s) \in L^2(\Omega, C([0,T], \ho))
\]
and thus 
\[
III\leq 4 \sup_{t\in[0,T]} \Big\Vert \partial_x \int_0^t p_{t-s}dW(s)\Big\Vert_{1+\rho}^2=: C_3(T,\omega) <\infty \quad \text{for P-a.e. }\omega\in\Omega\, . 
\]
The bound \eqref{eq:ito} is now applied to reduce the second term to
\begin{align*}
II\leq C_{2,1}\int_0^t \Vert u_x (s)\Vert_{1+\rho}^4 \, ds +C_{2,2}
\end{align*}
for constants $C_{2,1}=C_{2,1}(T,\omega, \Vert u^0 \Vert_{1+\rho}))$ and $ C_{2,2}=C_{2,2}(T,\omega, \Vert u^0 \Vert_{1+\rho})$. 
Altogether, this leads to
\[
\Vert u_x(t)\Vert_{1+\rho}^2\leq C(T,\omega, \Vert u^0\Vert_{H^1(1+\rho)})+C_{2,1}\int_0^t \Vert u_x (s)\Vert_{1+\rho}^4 \, ds \, .
\]
Set $\alpha(t):= \Vert u_x(t)\Vert_{1+\rho}^2$. Then, Gronwall's inequality together with \eqref{eq:ito} implies
\[
\Vert u_x(t)\Vert_{1+\rho}^2 \leq C(T,\omega, \Vert u^0\Vert_{H^1(1+\rho)})\, \exp\left({C_{2,1}\int_0^t \alpha(s)\, ds}\right)\leq \tilde{C}(T,\omega, \Vert u^0\Vert_{H^1(1+\rho)})
\]
for a positive constant $\tilde{C}$. The previous $P$-nullsets on which $\tilde{C}$ is possibly infinite do not depend on $t$ such that we even have
\[
\sup_{t\in[0,T]}\Vert u_x(t)\Vert_{1+\rho}^2 \leq  \tilde{C}(T,\omega,\Vert u^0\Vert_{H^1(1+\rho)})
\]
To prove continuity it remains to show that $u_x\in C([0,T];L^2(1+\rho))$. Considering
\begin{align*}
u_x(t)&= p_t u_x^0 + \int_0^t \partial_x p_{t-s} G(s,u(s)) \, ds + \partial_x \int_0^t p_{t-s} dW(s)= I+II+III
\end{align*}
clearly $I$ and $III$ have the required regularity. For the second term note that with Lemma \ref{lem:grad} for all $s\leq t$ the integrand takes values in $L^2(1+\rho)$ such that $II$ is a continuous process on this space.
\end{proof}

The objective of the following sections is to more thoroughly study the behaviour of $u$, providing information about the deviation of the stochastic solution from the deterministic wave. We will identify a suitable stochastic phase of the chosen reference profile $\hat{v}$ and subsequently quantify the remaining fluctuations around this profile in terms of the noise strength $\eps$. This first-order stability analysis with respect to $\eps$ follows closely the methods and ideas developed in \cite{Lang} (and partially also in \cite{KrSt}) for the study of stochastic neural field equations. In the case of bistable reaction-diffusion equations the involved diffusion and reaction operators however are no longer bounded, such that an extended notion of solution as well as refined estimates have to be introduced. 
To have the flexibility of also working in the larger spaces $L^2(\rho)$ and $L^2(\R)$ we introduce a second Gelfand triple 
$$V\hookrightarrow H=H^\ast \hookrightarrow V^\ast$$
with $H=L^2(\R)$ and $V=H^1(\R)$.

\subsection{The frozen-wave setting}\label{sec:frozen} 
Since our aim is to study the dynamics of solutions whose initial profile is already close to a travelling wave solution, we linearise the nonlinearity $f$ around the travelling wave $\hat{v}$ and obtain the representation

\begin{equation}\label{eq:SEl}
\begin{aligned}
d u(t)
&= \left[\mathscr{L}_t u(t)+ R(t,u)\right]\, dt + \varepsilon \, d W(t)
\end{aligned}
\end{equation}
with 
\begin{align}\label{def:lin}
\mathscr{L}_t: V\to V^*, \quad
\mathscr{L}_t u= \nu \Delta u+ b  f'(\hat{v}(\cdot+ct))\, u, \;\;t\in[0,T]
\end{align} 

and the nonlinear remainder
\begin{align*}
R: [0,T]\times V \to V^*, \quad
R(t,u)= b \left(f(u+\hat{v}(\cdot+ct))-f(\hat{v}(\cdot+ct))-f'(\hat{v}(\cdot+ct))u\right)
\end{align*}

To identify the dynamical equations for the wave-speed for the stochastic travelling wave, as well 
as for the remaining fluctuations, it will be useful to introduce the frozen-wave operator $\mathscr{L}^\#:V\to V^*$ given by
\begin{align*}
\mathscr{L}^\#u := \nu \Delta u + bf'(\hat{v}) u -c \partial_x u \, . 
\end{align*}
Note that by \eqref{tw} we have $\mathscr{L}^\#\hat{v}_x=0$.
Likewise, one can easily check that an eigenvector $\Psi$ of the adjoint operator corresponding to the eigenvalue $0$ is given by $\Psi(x)=e^{-\frac c \nu x}\,\hat{v}_x(x)$. $\Psi$ is strictly positive and we assume the following regularity property:

\begin{assumption}\label{ass:eigenfct}
The zero-eigenfunction $\Psi$ of $\lfa$ satisfies $\Psi \in V$.
\end{assumption}

Indeed, this regularity can be shown under the following additional assumption on the reaction function $f$, namely
\begin{equation} 
\tag{\bf A2} 
\begin{aligned} 
& f\in C^2 \, ,  f^{\prime\prime} (0) > 0 \text{ and } f^{\prime\prime} (1) < 0\, 
\end{aligned} 
\end{equation} 
saying that $f$ is strictly convex in a small neighbourhood around $0$
and strictly concave 
in a small neighbourhood around $1$.
The main implication of this additional assumption is then formulated in the next Lemma. 

\begin{lemma} 
\label{lem1_0}  
Assume $({\bf A1})-({\bf A2})$. Then $\frac{f(\hat{v})}{\hat{v}_x} (x)$ is strictly increasing 
in $x$ at $\pm\infty$. In particular, 
$$ 
\exists \gamma_- := \lim_{x\downarrow -\infty} \frac b\nu \frac{f(\hat{v})}{\hat{v}_x} (x) = \frac{c}{2\nu} - \sqrt{\left( \frac c{2\nu}\right)^2 - \frac b\nu f^\prime (0)} < 0 
$$ 
and 
$$  
\exists \gamma_+ := \lim_{x\downarrow \infty} \frac b\nu \frac{f(\hat{v})}{\hat{v}_x} (x) 
= \frac{c}{2\nu} + \sqrt{\left( \frac c{2\nu}\right)^2 - \frac b\nu f^\prime (1)} > 0 \, . 
$$ 
\end{lemma}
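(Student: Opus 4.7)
The plan is to reduce the analysis to a planar dynamical system by treating $\hat{v}_x$ as a function of $\hat{v}$. Setting $q(\hat{v}) := \hat{v}_x$ and using $\hat{v}_{xx} = q(\hat{v})\, q'(\hat{v})$, equation \eqref{tw} rewrites as the Riccati-type ODE
\[
\nu\, q(\hat{v})\, q'(\hat{v}) = c\, q(\hat{v}) - b\, f(\hat{v}), \qquad \hat{v} \in (0,1),
\]
with boundary values $q(0^+) = q(1^-) = 0$ and $q>0$ on $(0,1)$. Dividing \eqref{tw} by $\hat{v}_x$ also yields the useful identity $\tfrac{b}{\nu}\tfrac{f(\hat{v})}{\hat{v}_x} = \tfrac{c}{\nu} - (\log \hat{v}_x)'$, which shows that the behaviour at $\pm\infty$ is entirely governed by the local behaviour of $q$ near the endpoints.

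Near $\hat{v}=0$, the ansatz $q(\hat{v}) = \lambda\hat{v} + O(\hat{v}^2)$ forces $\lambda$ to satisfy the characteristic equation $\nu \lambda^2 - c\lambda + b f'(0) = 0$, with roots $\lambda_\pm = \tfrac{c}{2\nu} \pm \sqrt{(\tfrac{c}{2\nu})^2 - \tfrac{b}{\nu} f'(0)}$ of opposite signs since $f'(0)<0$ by (A1). The stable/unstable manifold theorem applied at the hyperbolic saddle $(0,0)$ of the planar system $(\hat{v}',w') = (w,(cw-bf(\hat{v}))/\nu)$ selects the positive branch $\lambda = \lambda_+$ and provides a $C^2$ graph expansion $q(\hat{v}) = \lambda_+\hat{v} + q_2 \hat{v}^2 + O(\hat{v}^3)$ with $q_2 = -\tfrac{bf''(0)/2}{3\nu\lambda_+ - c}$, determined by matching the next order in the Riccati ODE. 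Substituting this expansion and using the characteristic relation $\nu \lambda_+^2 = c\lambda_+ - bf'(0)$ yields after a short reduction
\[
\frac{b}{\nu}\,\frac{f(\hat{v})}{q(\hat{v})} = \lambda_- \,+\, \frac{b\, f''(0)}{3\nu\lambda_+ - c}\,\hat{v} \,+\, O(\hat{v}^2),
\]
where the leading term is identified as $\lambda_- = \gamma_-$ via Vieta's formula $\lambda_+\lambda_- = \tfrac{b}{\nu}f'(0)$, with sign $\gamma_- < 0$ forced by $f'(0) < 0$. Under (A2) and the elementary bound $3\nu\lambda_+ > c$, the coefficient of $\hat{v}$ is strictly positive; since $\hat{v}(x)$ is strictly increasing with $\hat{v}(x)\to 0$ as $x\to-\infty$, this gives strict monotonicity of $x \mapsto \tfrac{b}{\nu}\tfrac{f(\hat{v}(x))}{\hat{v}_x(x)}$ on a left-neighbourhood of $-\infty$ and monotone convergence to $\gamma_-$.

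The analysis at $+\infty$ is entirely symmetric after the substitution $u := 1 - \hat{v}$: the relevant characteristic equation is $\nu\mu^2 - c\mu + bf'(1) = 0$ with roots $\mu_\pm$, the stable-manifold theorem at the saddle $(1,0)$ selects the approach rate $\mu_-$, and the analogous second-order expansion of $q$ produces a first-order coefficient proportional to $\nu f''(1) < 0$. Since $1-\hat{v}(x)$ is positive and strictly decreasing as $x\to+\infty$, this again yields strict monotonicity of $\tfrac{b}{\nu}\tfrac{f(\hat{v}(x))}{\hat{v}_x(x)}$, now with limit $\mu_+ = \gamma_+ > 0$.

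The main technical obstacle is the justification of the $C^2$ graph expansion of $q$ at the boundary points — not merely a leading-order asymptotic form but with controlled quadratic correction. This is a standard consequence of the stable/unstable manifold theorem applied at the hyperbolic equilibria of the planar TW system, hyperbolicity being guaranteed by $f'(0), f'(1) < 0$ and the required $C^2$ regularity by $f \in C^2$ from (A2). Once this regularity of $q$ is in hand, the lemma reduces to the elementary algebraic computations sketched above.
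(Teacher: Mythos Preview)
Your argument is correct and takes a genuinely different route from the paper's own proof.

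The paper never passes to the phase plane or invokes the stable manifold theorem. Instead it works entirely in the $x$-variable: it first proves the decay $e^{-\frac{c}{\nu}x}\hat v_x^2(x)\to 0$ at $\pm\infty$ by integrating the travelling-wave ODE, then introduces the transformed variable $w:=e^{-\frac{c}{2\nu}x}\hat v_x$, which satisfies $w_{xx}=\big((\tfrac{c}{2\nu})^2-\tfrac{b}{\nu}f'(\hat v)\big)w$. The key observation is that the ``energy'' $E:=w_x^2+\big(\tfrac{b}{\nu}f'(\hat v)-(\tfrac{c}{2\nu})^2\big)w^2$ has derivative $\tfrac{b}{\nu}f''(\hat v)\hat v_x w^2$, whose sign near $\pm\infty$ is controlled by $({\bf A2})$. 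Since $w^2\to 0$ this forces $E>0$ near $\pm\infty$, and substituting $w_x=(\tfrac{c}{2\nu}-\tfrac{b}{\nu}\tfrac{f(\hat v)}{\hat v_x})w$ turns $E>0$ into exactly the inequality $\tfrac{d}{dx}\tfrac{f(\hat v)}{\hat v_x}>0$. The limits $\gamma_\pm$ are then read off via l'Hospital, which produces the same characteristic quadratic you obtain from the linearisation at the saddle.

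Your phase-plane approach is more geometric and in fact yields more: you get the explicit next-order coefficient $\tfrac{bf''(0)}{3\nu\lambda_+-c}$ (and its analogue at $+\infty$), hence a quantitative rate of approach, whereas the paper only obtains the qualitative strict monotonicity. The price is the appeal to the $C^2$ stable/unstable manifold theorem, which the paper avoids entirely---its argument is self-contained and uses nothing beyond elementary ODE manipulations. Two small points: with only $f\in C^2$ (which is all $({\bf A2})$ gives) the manifold is $C^2$, so your remainders should strictly be $o(\hat v^2)$ rather than $O(\hat v^3)$; this is harmless for the monotonicity conclusion. Also, the first-order coefficient at $+\infty$ is $-\tfrac{bf''(1)}{3\nu\mu_- - c}$, proportional to $f''(1)$ with a positive constant (since $3\nu\mu_- - c<0$), not literally to $\nu f''(1)$; the sign reasoning you sketch is nonetheless correct.
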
 

\begin{proof} 
We will first show that $\lim_{|x|\to\infty} e^{-\frac c\nu x} \hat{v}_x^2 (x) = 0$. 

\smallskip
\noindent 
Indeed, $\hat{v}_x \in L^1 (\R)$, hence $\lim_{n\to\infty} \hat{v}_x (x_n) = 0$ for some sequence $x_n\uparrow\infty$, implies that 
$$ 
\begin{aligned} 
\hat{v}_x^2 (x) 
& = \hat{v}_x^2 (x_n) - 2 \int^{x_n}_{x} \hat{v}_{xx} \hat{v}_x \, dx \\
& = \hat{v}^2_x (x_n) - 2 \frac{c}{\nu} \int^{x_n}_{x}\hat{v}^2_x \, dx + 2 \frac{b}{\nu} \int^{x_n}_x f (\hat{v}) \hat{v}_x \, dx \\ 
& \le \hat{v}_x^2 (x_n) + 2 \frac{b}{\nu} \int^{\hat{v} (x_n)}_{\hat{v} (x)} f(v) \, dv  \qquad \forall n\, .  
\end{aligned} 
$$ 
Consequently, 
$$ 
\hat{v} _x^2 (x) 
\le \lim_{n\to\infty} \hat{v}^2_x (x_n) + 2 \frac{b}{\nu} \int^{\hat{v} (x_n)}_{\hat{v} (x)} f (v)\, dv 
= \frac{2b}{\nu} \int^1_{\hat{v} (x)} f(v)\, dv \, . 
$$
In particular, 
$$ 
\lim_{x\to\infty} \hat{v}_x^2 (x) \le \limsup_{x\to\infty} \frac{2b}{\nu} \int^1_{\hat{v} (x)} f (v)\, dv = 0 
$$ 
and thus, $\lim_{x\to\infty} e^{-\frac c\nu x} \hat{v}_x^2 (x) = 0$, too.

\smallskip 
\noindent 
To see that $\lim_{x\to -\infty} e^{-\frac c\nu x} \hat{v}_x^2 (x) = 0$ note that for 
$x\le x_a := \hat{v}^{-1} (a)$ 
$$
\frac{d}{dx} ( e ^{- 2\frac{c}{\nu} x } \hat{v}^2_x )(x)  
= 2\left( - \frac{c}{\nu}\hat{v}_x  + \hat{v}_{xx}\right) e^{- 2 \frac{c}{\nu} x} \hat{v}_x (x) 
=  - 2 \, \frac{b}{\nu} e^{- 2 \frac{c}{\nu} x } f (\hat{v}) \hat{v}_x (x) \ge 0\, . 
$$ 
Consequently,  
$$ 
\lim_{x\to-\infty} e^{- 2\frac{c}{\nu} x } \hat{v}^2_x (x) = \inf_{x \le \hat{v}^{-1} (a)} e^{- 2\frac{c}{\nu} x } \hat{v}^2_x (x) =: \gamma < \infty 
$$ 
and thus   
$$ 
\lim_{x\to -\infty} e^{- \frac{c}{\nu} x} \hat{v}^2_x (x) 
\le \limsup_{x\to -\infty} e^{\frac{c}{\nu} x } \gamma = 0\, . 
$$

\smallskip 
\noindent 
Now define $w := e^{-\frac c{2\nu} x} \hat{v}_x$ and note that 
\begin{equation} 
\label{eq:lem1_0} 
w_{xx} =  \left( \left( \frac c{2\nu} \right)^2 - \frac b\nu f^\prime (\hat{v})\right)w\, , 
\end{equation} 
since differentiating $c\hat{v}_x = \hat{v}_{xx} + bf(\hat{v})$ implies 
$c\hat{v}_{xx} = \hat{v}_{xxx} + bf^{\prime}(\hat{v})\hat{v}_x$. Then Assumption ({\bf A2}) implies that 
$$ 
\frac d{dx} \left( w_x^2  + \left( \frac b\nu f' (\hat{v}) - \left( \frac c{2\nu}\right)^2 \right) w^2\right) = \frac b\nu f'' \left( \hat{v}\right) \hat{v}_x w^2 
$$ 
is strictly positive (resp. negative) for $x\downarrow - \infty$ (resp. $x\uparrow + \infty$). According to the previous part of the proof, $\lim_{|x|\to\infty} w^2 (x) = 0$, hence 
$$
\lim_{|x| \to\infty}  \left( w_x^2 + \left( \frac b\nu f' (\hat{v}) - \left( \frac c{2\nu}\right)^2 \right) w^2\right) \ge  0 
$$ 
so that 
$$ 
w_x^2 + \left( \frac b\nu f' (\hat{v}) - \left( \frac c{2\nu}\right)^2 \right) w^2 > 0   
$$ 
for $x$ at $\pm\infty$. Using  
$w_x = \left( \frac{c}{2 \nu} - \frac{b}{\nu} \frac{f(\hat{v})}{\hat{v}} \right) w$, we conclude that  
$$ 
\left( \frac{c}{2 \nu} - \frac{b}{\nu} \frac{f (\hat{v})}{\hat{v} _x } \right)^2 + \frac{b}{\nu} f^{\prime} (\hat{v}) - \left( \frac{c}{2 \nu} \right)^2 >  0 
$$ 
or equivalently 
\begin{equation} 
\label{eq1:lema1_0} 
\frac{b}{\nu} f^{\prime} (\hat{v}) - \frac{b}{\nu} \frac{f (\hat{v})}{\hat{v} _x}\left( \frac c\nu - \frac{b}{\nu} \frac{ f(\hat{v})}{\hat{v}_x}\right) > 0 \, . 
\end{equation} 
In particular, 
$$ 
\begin{aligned} 
\frac b\nu \frac{d}{dx} \frac{f(\hat{v})}{\hat{v}_x} 
& = \frac b\nu f^{\prime} (\hat{v}) - \frac b\nu \frac{f (\hat{v})}{\hat{v}_x} \frac{\hat{v}_{xx}}{\hat{v}_x} > 0   
\end{aligned} 
$$
so that $\frac{f(\hat{v})}{\hat{v}_x}$ is strictly increasing at $\pm\infty$. 

\smallskip
\noindent  
To prove the remaining identities for $\gamma_\pm$ observe that by l'Hospital's rule we obtain that 
$$ 
\gamma_- = \lim_{x\to -\infty} \frac b\nu \frac{f(\hat{v})}{\hat{v}_x}(x) = \lim_{x\to -\infty} \frac b\nu f^\prime (\hat{v})(x) \frac{\hat{v}_x}{\hat{v}_{xx}} (x) 
= \frac b\nu f^{\prime} (0) \frac 1{\frac c\nu - \gamma_{-}} 
$$ 
or equivalently, $\gamma_{-} \left(\frac c\nu - \gamma_{-}\right) =  \frac b\nu f^\prime (0)$.
Since $\gamma_{-} < 0$ we obtain the assertion. $\gamma_+$ can be computed similarly. 
\end{proof}

\begin{lemma} 
Assume $({\bf A1})-({\bf A2})$. Then Assumption \ref{ass:eigenfct} is satisfied.
\end{lemma}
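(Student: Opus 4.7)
The plan is to establish that $\Psi \in H^1(\R)$ by showing that $\Psi$ decays exponentially at $\pm\infty$ (whence $\Psi \in L^2$) and that $\Psi_x$ inherits the same decay (whence $\Psi_x \in L^2$). The mechanism will be to read off the asymptotic logarithmic derivative of $\Psi$ from Lemma \ref{lem1_0}; the required sign information on $\gamma_\pm$ is exactly what that lemma provides.

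First, since $\hat{v}$ is strictly monotone increasing, $\hat{v}_x>0$ on $\R$ and $\Psi>0$, so $\log\Psi$ is well-defined and smooth. Rearranging \eqref{tw} gives $\tfrac{\hat{v}_{xx}}{\hat{v}_x}=\tfrac{c}{\nu}-\tfrac{b}{\nu}\tfrac{f(\hat{v})}{\hat{v}_x}$, which yields the key identity
\[
(\log\Psi)'(x)=-\frac{c}{\nu}+\frac{\hat{v}_{xx}}{\hat{v}_x}(x)=-\frac{b}{\nu}\frac{f(\hat{v})}{\hat{v}_x}(x).
\]
By Lemma \ref{lem1_0}, $(\log\Psi)'(x)\to-\gamma_-$ as $x\to-\infty$ and $(\log\Psi)'(x)\to-\gamma_+$ as $x\to+\infty$, with $-\gamma_->0$ and $-\gamma_+<0$.

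Second, I would convert this pointwise-limit information into exponential decay. Picking $\kappa_-\in(0,-\gamma_-)$ and $\kappa_+\in(0,\gamma_+)$, one finds thresholds $X_\pm\in\R$ with $(\log\Psi)'(y)\geq\kappa_-$ for $y\leq X_-$ and $(\log\Psi)'(y)\leq-\kappa_+$ for $y\geq X_+$. Integrating these differential inequalities provides constants $C_\pm>0$ such that $\Psi(x)\leq C_-e^{\kappa_-x}$ for $x\leq X_-$ and $\Psi(x)\leq C_+e^{-\kappa_+x}$ for $x\geq X_+$. Combined with the continuity of $\Psi$ on $[X_-,X_+]$, this establishes $\Psi\in L^2(\R)$.

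Third, for $\Psi_x$ I would use the factorisation $\Psi_x=(\log\Psi)'\Psi=-\tfrac{b}{\nu}\tfrac{f(\hat{v})}{\hat{v}_x}\Psi$. The prefactor is continuous on $\R$ and has finite limits at $\pm\infty$ (again by Lemma \ref{lem1_0}), hence is globally bounded by some $K<\infty$. Therefore $|\Psi_x|\leq K|\Psi|$ pointwise, and the $L^2$-bound on $\Psi$ transfers directly to $\Psi_x$. Putting the pieces together gives $\Psi\in V=H^1(\R)$.

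The only delicate point is in passing from the convergence $(\log\Psi)'\to-\gamma_\pm$ to genuine exponential decay of $\Psi$ itself, but this is a routine Gronwall-type argument and becomes clean precisely because Lemma \ref{lem1_0} guarantees that the limiting slopes are strictly of the correct sign (strictly positive at $-\infty$, strictly negative at $+\infty$). Everything else in the argument is bookkeeping.
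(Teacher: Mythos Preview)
Your proof is correct and rests on the same mechanism as the paper's: both use the limits $\gamma_\pm$ from Lemma~\ref{lem1_0} to force exponential decay of $\Psi$ at $\pm\infty$, and both use the global boundedness of $\tfrac{f(\hat v)}{\hat v_x}$ (equivalently of $\tfrac{\hat v_{xx}}{\hat v_x}$) to control $\Psi_x$ pointwise by $\Psi$.

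The packaging differs slightly. The paper works with $\Psi^2=e^{-2\frac{c}{\nu}x}\hat v_x^2$ and shows monotonicity of the auxiliary quantities $e^{-(2\frac{c}{\nu}-\gamma_\pm)x}\hat v_x^2$ near $\pm\infty$, then integrates; for $\Psi_x$ it expands $\Psi_x$ in terms of $\hat v_x,\hat v_{xx}$ and bounds $\tfrac{|\hat v_{xx}|}{\hat v_x}$ via the travelling-wave equation. Your observation that $(\log\Psi)'=-\tfrac{b}{\nu}\tfrac{f(\hat v)}{\hat v_x}$ ties the logarithmic derivative of $\Psi$ directly to the quantity whose limits Lemma~\ref{lem1_0} computes, which makes the exponential-decay step and the factorisation $\Psi_x=(\log\Psi)'\Psi$ immediate. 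This is a cleaner presentation of the same argument; nothing is gained or lost mathematically.
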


\begin{proof} 
We have
\begin{align*}
\Vert \Psi\Vert_V^2 = \Vert \Psi\Vert_H^2 + \Vert \Psi_x \Vert_H^2
\leq \left(1 + 2 \frac{c^2}{\nu^2}\right)\int e^{-2\frac c\nu x} \, \hat{v}_x^2 \, dx 
+ 2 \int e^{-2\frac c\nu x}  \, \hat{v}_{xx}^2 \, dx
\end{align*}
The previous Lemma implies that 
$$
\frac{d}{dx} \left( e^{-\left( 2\frac c\nu - \gamma_-\right)x}\hat{v}_x^2 \right)  
= - \left( 2\frac b\nu \frac{f(\hat{v})}{\hat{v}_x} - \gamma_- \right) 
 e^{-\left( 2\frac c\nu - \gamma_-\right)x}\hat{v}_x^2 \ge 0
$$ 
for $x\downarrow-\infty$, hence $M_- := \sup_x e^{-\left( 2\frac c\nu - \gamma_-\right)x} 
\hat{v}_x^2 < \infty$ which implies that 
\begin{equation} 
\label{eq1:lem1_1} 
\int_{-\infty}^x e^{-2\frac c\nu y} \hat{v}_x^2\, dy \le M_- \int_{-\infty}^x e^{-\gamma_- y}\, dy < \infty\qquad \forall\, x\, . 
\end{equation} 
Similarly, 
$$
\frac{d}{dx} \left( e^{-\left( 2\frac c\nu - \gamma_+\right)x}\hat{v}_x^2 \right) 
= - \left( 2\frac b\nu \frac{f(\hat{v})}{\hat{v}_x} - \gamma_+\right) e^{-\left( 2\frac c\nu + 
\gamma_+\right)x}\hat{v}_x^2 \le 0
$$ 
for $x\uparrow\infty$, hence $M_+ := \sup_x e^{-\left( 2\frac c\nu - \gamma_+\right)x}\hat{v}_x^2 < \infty$ which implies that 
\begin{equation} 
\label{eq2:lem1_1} 
\int_x^\infty e^{-2\frac c\nu y} \hat{v}_x^2\, dy \le M_+ \int_x^\infty 
e^{-\gamma_+ y}\, dy < \infty\qquad \forall\, x\, . 
\end{equation} 
Combining \eqref{eq1:lem1_1} and  \eqref{eq2:lem1_1} we obtain that 
$\int e^{-2\frac c\nu x} \, \hat{v}_x^2 \, dx < \infty$. 

\medskip 
\noindent 
Finally, 
\begin{align*}
\int e^{-2\frac c\nu x}  \, \hat{v}_{xx}^2 \, dx \leq \left(\sup_{x\in\R} \frac{\vert \hat{v}_{xx}\vert}{\hat{v}_x}\right)^2  \int e^{-2\frac c\nu x}  \, \hat{v}_{x}^2 \, dx 
\end{align*}
The above supremum can be bounded by
\[
\sup_{x\in\R} \frac{\vert \hat{v}_{xx}\vert}{\hat{v}_x} \leq \frac c\nu + \sup_{x\in\R} \frac{\vert f(\hat{v})\vert}{\hat{v}_x}
\]
which is finite again by the previous Lemma \ref{lem1_0}. 
\end{proof}

We normalise $\Psi$ such that $\lb \Psi, \hat{v}_x \rb =1$.
As mentioned earlier, in the following stability analysis it will turn out to be of advantage to work in weighted measure spaces instead of the unweighted choices of $V$ and $H$. A natural choice for such a measure is 
$$\rho(x):= \frac{\Psi(x)}{\hat{v}_x(x)}= Z \, e^{-\frac c \nu x}$$ where $Z$ denotes the normalising constant. One reason for $\rho$ to be a natural choice is that in the space $L^2(\rho)$ the frozen-wave operator $\lf$ separates $\hat{v}_x$, which due to the identity\newline
\mbox{$\partial_t\hat{v}(x+ct)=c\hat{v}_x(x+ct)$} is the direction of movement of the wave, from its orthogonal complement $\hat{v}_x^\bot$ in the following sense: For $u\in V$
\begin{align*}
0&=\lv u, \lfa\Psi\rvs = \lvs \lf u, \Psi\rv \\
&= - \nu \int u_x \, (\hat{v}_x \rho)_x \, dx
+ b \, \int f'(\hat{v}) u\,  \hat{v}_x \rho \, dx
-c \int  u_x\, \hat{v}_x \rho \, dx\\
&=\lvs \nu \Delta u, \hat{v}_x \rho\rv + b\, \lb f'(\hat{v}) u, \hat{v}_x \rb_{\rho} -c\, \lb u_x, \hat{v}_x \rb_{\rho}\\
&= \lvs \lf u, \hat{v}_x \rho\rv
\end{align*}
Note that for $u\in H^2(\R)$ this is the usual orthogonality in $L^2(\rho)$, i.e. $\lf(H^2(\R))\subset \hat{v}_x^\bot$ in $L^2(\rho)$.

Due to the fact that $\hat{v}_x$ is a zero-eigenvector of $\lf$ perturbations in that direction lead to a random phase shift in the dynamics. To also bound the spread of perturbations in the shape of the wave profile (see Theorem \ref{thm:firstorder}) we need to control the behaviour of the dynamics in directions orthogonal to $\hat{v}_x$, more precisely we assume the following contraction property 
\begin{assumption}\label{ass:spectralgap}
There exist $\kappa>0$, $C_*>0$ such that for $u\in V$ with  $u\rho \in V$
\begin{align*}\label{C1}
\lvs \lf u, u\rho \rv \leq -\kappa \Vert u \Vert_\rho^2+ C_*\langle \hat{v}_x, u \rangle^2_\rho \tag{\bf C1}
\end{align*}
\end{assumption}

i.e. the flow generated by the frozen-wave operator is contracting on the orthogonal complement of $\hat{v}_x$ in $L^2(\rho)$.

Alternatively, this abstract assumption can again be replaced by the additional assumption ({\bf A2}) on the reaction term $f$.
We will prove the contraction property ({\bf C1}) in analogy to \cite[Theorem 1.5]{Stannat14}, where a similar spectral gap inequality for the (unfrozen) operator in the unweighted space $L^2(\R)$ has been shown.

\begin{proposition} Given \textnormal{({\bf A1}) - ({\bf A2})} the frozen-wave operator $\lf$ satisfies Assumption \ref{ass:spectralgap}.
\end{proposition}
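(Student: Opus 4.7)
The plan is to reduce $({\bf C1})$ to a weighted $1$D Poincar\'e inequality via a ground-state substitution $u = \hat{v}_x\phi$. Since differentiating \eqref{tw} yields $\lf \hat{v}_x = 0$, direct computation gives
\begin{align*}
\lf(\hat{v}_x\phi) = \nu\hat{v}_x\phi_{xx} + (2\nu\hat{v}_{xx} - c\hat{v}_x)\phi_x .
\end{align*}
Pairing with $u\rho = \hat{v}_x\phi\rho$, integrating by parts once, and using $\rho_x = -\frac{c}{\nu}\rho$, the cross-terms involving $\hat{v}_{xx}$ and $c$ cancel exactly, leaving the clean identity
\begin{align*}
\lvs \lf u, u\rho\rv = -\nu\int \hat{v}_x^2\,\phi_x^2\,\rho\,dx .
\end{align*}
This is the heart of the argument: the quadratic form of $\lf$ on $L^2(\rho)$ equals $\nu$ times the Dirichlet form of the finite measure $d\mu := \hat{v}_x^2\rho\,dx$ acting on $\phi = u/\hat{v}_x$.

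Setting $M := \mu(\R) = \|\hat{v}_x\|_\rho^2$, which is finite by the same tail estimates already used to establish Assumption \ref{ass:eigenfct}, the identities $\|u\|_\rho^2 = \int\phi^2\,d\mu$ and $\lb u,\hat{v}_x\rb_\rho = \int\phi\,d\mu$ turn $({\bf C1})$ into
\begin{align*}
\int \phi^2\,d\mu \le \frac{\nu}{\kappa}\int\phi_x^2\,d\mu + \frac{C_*}{\kappa}\left(\int\phi\,d\mu\right)^2 .
\end{align*}
Choosing $C_* = \kappa/M$ this is exactly the Poincar\'e inequality $\operatorname{Var}_{\mu/M}(\phi)\le (\nu/\kappa)\int\phi_x^2\,d(\mu/M)$ for the probability measure $\mu/M$, so the problem reduces to establishing a spectral gap for $\mu/M$ on $\R$.

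To prove this Poincar\'e inequality I would invoke the $1$D Muckenhoupt (Hardy-type) criterion. The density $h(x) := Z\hat{v}_x^2(x)e^{-cx/\nu}$ is continuous and strictly positive; Lemma \ref{lem1_0} and the asymptotic rates $\gamma_\pm$ it provides allow one to linearise \eqref{tw} at $\hat{v}\in\{0,1\}$ and conclude that $\hat{v}_x^2$ decays at $-\infty$ strictly faster than $\rho$ grows (rate controlled by $f^\prime(0)<0$) and at $+\infty$ with a rate strictly exceeding $c/\nu$ (rate controlled by $f^\prime(1)<0$). Hence $h$ has two-sided exponential decay, and for any median $m$ of $\mu$ the Muckenhoupt integrals
\begin{align*}
B_\pm := \sup_{\pm(x-m)>0}\mu\bigl(\{\pm y > \pm x\}\bigr)\int_m^x \frac{dy}{h(y)}
\end{align*}
are both finite. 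This yields a finite Poincar\'e constant $C_P$, and setting $\kappa := \nu/C_P$, $C_* := \nu/(C_P M)$ establishes $({\bf C1})$ for smooth compactly supported $u$. A routine density argument, based on the admissibility of $\lf$ already used in the proof of Theorem \ref{thm:solu}, extends the inequality to all $u\in V$ with $u\rho \in V$.

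The hard part is the tail analysis in Step 3. Although heuristically $\mu$ is log-concave at infinity, verifying that the decay of $h$ and the growth of $1/h$ are simultaneously strong enough to keep $B_\pm$ finite requires the sharp asymptotic rates $\gamma_\pm$ from Lemma \ref{lem1_0}, which in turn depend on assumption $({\bf A2})$ on the sign of $f^{\prime\prime}$ at the endpoints. All remaining steps (the ground-state identity and the density extension) are routine once the Poincar\'e inequality for $\mu$ is secured.
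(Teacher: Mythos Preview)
Your proposal is correct and follows essentially the same route as the paper: both carry out the ground-state substitution $u=\hat v_x\phi$ to obtain $\lvs\lf u,u\rho\rv=-\nu\int\phi_x^{2}\,d\mu$ with $d\mu=\hat v_x^{2}\rho\,dx$, and then reduce $({\bf C1})$ to a weighted Poincar\'e inequality for the finite measure $\mu$, established from the tail asymptotics supplied by Lemma~\ref{lem1_0}. The only difference is in the packaging of that last step---the paper derives the pointwise bounds $\int_{-\infty}^{x}d\mu\le K_-\,\tfrac{d\mu}{dx}(x)$ and $\int_{x}^{\infty}d\mu\le K_+\,\tfrac{d\mu}{dx}(x)$ directly and from them a Hardy inequality on each half-line, whereas you invoke the equivalent Muckenhoupt criterion as a black box.
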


\begin{proof}
For $u\in C_c^2(\R)$ we write $u=h\hat{v}_x$. Due to the identity $\lf \hat{v}_x= 0$
we obtain
\begin{align*}
\lf u &= \nu h_{xx} \hat{v}_x +2 \nu h_x \hat{v}_{xx} + \nu h \hat{v}_{xxx}+ b f'(\hat{v}) \hat{v}_x h -c h_x \hat{v}_x - ch\hat{v}_{xx}\\
&= \nu h_{xx} \hat{v}_x +2 \nu h_x \hat{v}_{xx}-c h_x \hat{v}_x 
\end{align*}
and the associated quadratic form
\begin{align*}
\mathcal{E}(h):= - \, \lvs \lf u, u\rho \rv = \nu \int h_x^2\, \hat{v}_x^2 \,Z e^{-\frac c\nu x} \, dx\, .
\end{align*}
Rewriting also \eqref{C1} in terms of $u=h\hat{v}_x$ and setting $w(x)= \hat{v}_x(x)\, e^{-\frac{c}{2\nu} x}$ our aim is to prove an inequality of the type
\begin{align*}
- \nu \int h_x^2 \, w^2 \, dx \leq -\kappa \int h^2 w^2 \, dx + C_\ast\left(\int hw^2 \, dx\right)^2.
\end{align*}

\smallskip 
\noindent 
Recall from the proof of Lemma \ref{lem1_0} that 
$$ 
\frac{d}{dx} \left( e^{-\left( 2\frac c\nu - \gamma_-\right)x}\hat{v}_x^2 \right)  
= - \left( 2\frac b\nu \frac{f(\hat{v})}{\hat{v}_x} - \gamma_- \right) 
 e^{-\left( 2\frac c\nu - \gamma_-\right)x}\hat{v}_x^2 \ge 0
$$ 
for $x\downarrow-\infty$, so that 
$$ 
\int_{-\infty}^x e^{-\frac c\nu y} \hat{v}_x^2\, dy 
\le \int_{-\infty}^x e^{\left( \frac c\nu - \gamma_-\right) y} \hat{v}_x^2\, dy 
\, e^{-\left( 2\frac c\nu - \gamma_-\right)x}\hat{v}_x^2 (x) = \frac 1{\frac c\nu - \gamma_-} 
e^{-\frac c\nu x} \hat{v}_x^2 
$$ 
for $x\downarrow -\infty$. Since $\int_{-\infty}^0 e^{-\frac c\nu y} \hat{v}_x^2\, dy < \infty$ 
and $e^{-\frac c\nu x} \hat{v}_x^2$ is locally bounded from below we can find a finite constant 
$K_-$ such that 
\begin{equation} 
\label{prop:eq1}
\int_{-\infty}^x e^{-\frac c\nu y} \hat{v}_x^2\, dy 
\le K_- e^{-\frac c\nu x} \hat{v}_x^2 \mbox{ for all } x\le 0\, .
\end{equation} 
Similarly, 
$$
\frac{d}{dx} \left( e^{-\left( 2\frac c\nu - \gamma_+\right)x}\hat{v}_x^2 \right)  
= - \left( 2\frac b\nu \frac{f(\hat{v})}{\hat{v}_x} - \gamma_+ \right) 
 e^{-\left( 2\frac c\nu - \gamma_+\right)x}\hat{v}_x^2 \le 0
$$ 
for $x\uparrow +\infty$, so that 
$$ 
\int_x^\infty e^{-\frac c\nu y} \hat{v}_x^2\, dy 
\le \int_x^\infty e^{\left( \frac c\nu - \gamma_+\right) y} \hat{v}_x^2\, dy 
\, e^{-\left( 2\frac c\nu - \gamma_+\right)x}\hat{v}_x^2 (x) = \frac 1{\gamma_+ - \frac c\nu } 
e^{-\frac c\nu x} \hat{v}_x^2 
$$ 
for $x\uparrow +\infty$, thereby using $\gamma_+ - \frac c\nu  = \sqrt{\left( \frac{c}
{2\nu}\right)^2 - \frac b\nu f^\prime (1)} - \frac c{2\nu} > 0$. Since 
$\int_0^\infty e^{-\frac c\nu y} \hat{v}_x^2\, dy < \infty$ 
and $e^{-\frac c\nu x} \hat{v}_x^2$ is locally bounded from below, we can also find a finite constant $K_+$ such that 
\begin{equation} 
\label{prop:eq2}
\int_x^\infty e^{-\frac c\nu y} \hat{v}_x^2\, dy 
\le K_+ e^{-\frac c\nu x} \hat{v}_x^2 \mbox{ for all } x\ge 0\, .
\end{equation}

\smallskip 
\noindent 
Estimate \eqref{prop:eq1} now implies for $h\in C^1_b$ with $h(0) = 0$ that 
$$ 
\begin{aligned} 
\int_{-\infty}^0 h^2 w^2 \, dx & = -2\int_{-\infty}^0 \int_x^0 h_x h\, dy w^2 (x)\, dx 
= -2\int_{-\infty}^0 h_x (y) h (y)\int_{-\infty}^y w^2 (x)\, dx\, dy  \\
& \le 2K_- \int_{-\infty}^0 |h_x (y) h (y)| w^2 (y)\, dy 
 \le \frac 12 \int_{-\infty}^0 h^2 w^2 \, dx + 2K_-^2 \int_{-\infty}^0 h_x^2 w^2 \, dx
\end{aligned} 
$$ 
hence 
$$ 
\int_{-\infty}^0 h^2 w^2 \, dx \le 4K_-^2 \int_{-\infty}^0 h_x^2 w^2 \, dx
$$ 
and similarly, using \eqref{prop:eq2}, 
$$ 
\int_0^\infty h^2 w^2 \, dx \le 4K_+^2 \int_0^\infty h_x^2 w^2 \, dx\, . 
$$ 
Combining both estimates we obtain the weighted Hardy type inequality
\[
\int h^2 w^2 \, dx \leq K_-\vee K_+ \int h_x^2 \, w^2 \, dx
\]
for any $h\in C_b^1(\R)$ with $h(0)=0$. For a general $h\in C_b^1(\R)$ centering allows us to derive the inequality
\[
K_-\vee K_+ \int h_x^2 \, w^2 \, dx \geq \int (h-h(\hat{x}))^2 w^2 \, dx \, \,  .
\]
We introduce the normalising constant $W=\int w^2\, dx$ and the normalised measure $\tilde{w}^2= W^{-1}{w^2}$  to further estimate the above right hand side by
\begin{align*}
 E_{\tilde{w}^2}[(h-h(\hat{x}))^2] \geq \operatorname{Var}_{\tilde{w}^2}(h)
= \int h^2 \tilde{w}^2 \, dx - \left(\int h \tilde{w}^2 \, dx\right)^2\, .
\end{align*}
Altogether, this yields the desired Poincare inequality 
\begin{align*}
\int h^2 w^2 \, dx \leq K_-\vee K_+ \int h_x^2 \, w^2 \, dx + W^{-1} \left(\int hw^2 \, dx\right)^2
\end{align*}
for any $h\in C_b^1(\R)$.
\end{proof}

\begin{lemma}
Under Assumption \ref{ass:spectralgap} $\lf$ generates a contraction semigroup $(P_t^\#)_{t\geq 0}$ on \mbox{$\hat{v}_x^\bot\subset L^2(\rho)$} satisfying
\begin{align}\label{eq:contraction}
\Vert P_t^\# u \Vert_\rho \leq e^{-\kappa t } \Vert u \Vert_ \rho
\end{align}
\end{lemma}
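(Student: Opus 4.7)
The plan is to realize $\lf$ as the generator of a $C_0$-semigroup on $L^2(\rho)$ via standard non-symmetric form methods, then verify invariance of $\hat{v}_x^\bot$, and finally derive the exponential decay from (C1) via a Gronwall-type argument.

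First I would introduce the sesquilinear form $a(u,v) := -\lvs \lf u, v\rho\rv$ on an $H^1(\rho)$-type domain densely embedded in $L^2(\rho)$. A direct computation using $\rho_x = -\tfrac{c}{\nu}\rho$ and integration by parts (exactly as in the coercivity estimate in the proof of Theorem \ref{thm:solu}) gives the Gårding inequality
\[
a(u,u) = \nu\int u_x^2\rho\,dx - b\int f'(\hat{v})u^2\rho\,dx \ \geq\ \nu\|u_x\|_\rho^2 - b\eta_1\|u\|_\rho^2,
\]
where the terms involving $c$ cancel. Continuity of $a$ in the associated norm is immediate from $(\mathbf{B1})$. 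Lions' theorem for coercive (non-symmetric) forms then yields a $C_0$-semigroup $(P_t^\#)_{t\geq 0}$ on $L^2(\rho)$ whose generator extends $\lf$.

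Next I would show that $\hat{v}_x^\bot$ is $(P_t^\#)$-invariant. The identity $\lvs \lf u, \hat{v}_x\rho\rv = 0$ for all $u\in V$ was already established just before Assumption \ref{ass:spectralgap}, which rewritten in the $L^2(\rho)$-inner product says $\langle \lf u, \hat{v}_x\rangle_\rho = 0$. Hence $\hat{v}_x$ lies in the kernel of the $L^2(\rho)$-adjoint of $\lf$, and for $u \in \hat{v}_x^\bot$ in the domain,
\[
\frac{d}{dt}\langle P_t^\# u, \hat{v}_x\rangle_\rho = \langle \lf P_t^\# u, \hat{v}_x\rangle_\rho = 0,
\]
so $\langle P_t^\# u, \hat{v}_x\rangle_\rho \equiv \langle u,\hat{v}_x\rangle_\rho = 0$. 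Thus $P_t^\#$ restricts to a $C_0$-semigroup on $\hat{v}_x^\bot$. Finally, for such $u$ the chain rule together with $(\mathbf{C1})$ and the vanishing of the projection term gives
\[
\frac{d}{dt}\|P_t^\# u\|_\rho^2 = 2\lvs \lf P_t^\# u, (P_t^\# u)\rho\rv \leq -2\kappa\|P_t^\# u\|_\rho^2 + 2C_*\langle \hat{v}_x, P_t^\# u\rangle_\rho^2 = -2\kappa\|P_t^\# u\|_\rho^2,
\]
and Gronwall together with a density argument yields the claimed contraction $\|P_t^\# u\|_\rho \leq e^{-\kappa t}\|u\|_\rho$ on all of $\hat{v}_x^\bot$.

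The main technical obstacle is the generation step: because of the drift term $-c\partial_x$ the form $a$ is not symmetric, so one cannot appeal to the standard self-adjoint form theory and must instead invoke the non-symmetric version of Lions' theorem together with a careful choice of weighted Sobolev form domain. A secondary subtlety is to check that the invariance argument, done first on the domain of the generator, extends to all of $\hat{v}_x^\bot$ by density and strong continuity of $(P_t^\#)$; this is routine in the present setting since $C_c^\infty(\R)$ is dense in both $L^2(\rho)$ and $H^1(\rho)$.
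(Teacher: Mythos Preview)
Your argument is essentially correct, but you have missed the key structural fact that makes the paper's proof simpler: the form $a$ is \emph{symmetric} on $L^2(\rho)$, contrary to what you assert in your final paragraph. Indeed, your own diagonal computation already hints at this: if you redo it for $a(u,v)$ with $u\neq v$, the same integration by parts using $\rho_x=-\tfrac{c}{\nu}\rho$ gives
\[
a(u,v)=\nu\int u_x v_x\,\rho\,dx - b\int f'(\hat{v})\,u\,v\,\rho\,dx,
\]
so the first-order drift term $-c\,\partial_x$ is completely absorbed by the weight. This is precisely why the measure $\rho$ was introduced. The paper therefore observes directly that $(\lf,C_c^2(\R))$ is symmetric and semibounded on $L^2(\rho)$, takes its Friedrichs extension, and obtains a symmetric $C_0$-semigroup; the contraction estimate then follows from the same Gronwall step you give.

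In summary, your route via Lions' theorem for non-symmetric forms is valid but unnecessarily heavy, and the statement that ``one cannot appeal to the standard self-adjoint form theory'' is incorrect here. Otherwise the invariance argument and the Gronwall step agree with the paper's proof.
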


\begin{proof}
Note that $(\lf, C_c^2(\R))$ is symmetric on $L^2(\rho)$ and  the subspace $\hat{v}_x^\bot$ is invariant under $\lf$, i.e. for $u\in \hat{v}_x^\bot$ we have $\langle \lf u, \hat{v}_x \rho \rangle = \langle u, \lfa \Psi \rangle = 0$ and thus $\lf u \in \hat{v}_x^\bot$.
By Assumption \ref{ass:spectralgap}, the operator $\lf$ is bounded from above by
\[
\langle \lf u, u \rangle_\rho \leq - \kappa \Vert u\Vert_\rho^2
\] 
on the orthogonal complement $\hat{v}_x^\bot$. 
It follows that $\lf$ is essentially self-adjoint and its Friedrichs extension $(\lf, \mathcal{D}(\lf))$ generates a symmetric $C_0$-semigroup $(P_t^{\#})_{t\in[0,T]}$ on $\hat{v}_x^\bot$ (see \cite{Kato}).
It is easy to see that Assumption \ref{ass:spectralgap} extends to all $u\in \mathcal{D}(\lf)$. 
In particular,
\begin{align*}
\frac 12 \frac{d}{dt} \Vert P_t^{\#} u\Vert_{\rho}^2 = \lb \lf P_t^{\#}u, P_t^{\#}u \rb_{\rho}
\leq -\kappa  \Vert P_t^{\#}u\Vert_{\rho}^2
\end{align*}
which yields
\[
\Vert P_t^{\#}u\Vert_{\rho}^2 \leq e^{-2\kappa t} \Vert u\Vert_\rho^2
\]
for all $u\in \mathcal{D}(\lf)$ and subsequently for all $u\in \hat{v}_x^\bot$. 
\end{proof}

We would like to highlight that this contraction property is only needed for the asymptotic second moment estimate in Section \ref{sec:prop} but not for the main multiscale decompositions in Theorem \ref{thm:firstorder} and Theorem \ref{thm:limit}.
A similar contraction property can in general not be expected to hold true for the non-autonomous linearisation $(\mathscr{L}_t)$ given in \eqref{def:lin}. But yet we will show that this family of linear operators generates an evolution family on $H^1(1+\rho)$ facilitating a mild solution representation of $u$ on this space.
The family of linear operators $(\mathscr{L}_t),\, t\in[0,T]$, can be seen as operators from \mbox{$H^3(1+\rho) \to H^1(1+\rho)$} satisfying
\begin{align*}
\Vert \lin_t u \Vert_{H^1(1+\rho)} \leq L_* \Vert u \Vert_{H^3(1+\rho)}
\end{align*}

\begin{lemma}
$(\lin_t)$ generates an evolution family $(P_{t,s})_{0\leq s\leq t \leq T}$ on $H^1(1+\rho)$ with 
\begin{align}\label{evol}
\Vert P_{t,s} h \Vert_{H^1(1+\rho)} \leq e^{L_*(t-s)} \Vert h \Vert_{H^1(1+\rho)}
\end{align}
for a constant $L_\ast >0$.
\end{lemma}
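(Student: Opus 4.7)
My plan is to construct the evolution family via the energy method: formally solve the linear Cauchy problem $\partial_t u(t) = \mathscr{L}_t u(t)$ on $[s,T]$ with $u(s)=h$, and derive the stated norm bound from a Gronwall-type estimate in $H^1(1+\rho)$. The coefficient $q(t,x) := bf'(\hat{v}(x+ct))$ is uniformly bounded in $(t,x)$ by (\textbf{B4}) together with boundedness of $\hat{v}$, while its spatial derivative $bf''(\hat{v}(\cdot+ct))\hat{v}_x(\cdot+ct)$ is also uniformly bounded (using $\hat{v}\in C^2$ from (\textbf{A2}) and $\hat{v}_x$ bounded). Moreover, $t\mapsto q(t,\cdot)$ is continuous with values in $L^\infty(\R)$. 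These facts ensure that $\mathscr{L}_t = \nu\Delta + q(t,\cdot)$ is a bounded multiplicative perturbation of the generator $\nu\Delta$ on $H^1(1+\rho)$, which places us in the classical framework for parabolic non-autonomous evolution equations (see e.g. Pazy, Ch.~5, Thm.~3.1 or Tanabe): one obtains a unique evolution family $(P_{t,s})_{0\le s\le t\le T}$ such that for sufficiently regular $h$, $u(t):=P_{t,s}h$ solves the Cauchy problem strongly, with $u(t)$ taking values in $H^3(1+\rho)$ almost everywhere.

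With existence secured, the main content is the norm bound. First I would compute, for a sufficiently regular solution, the time-derivative of the squared norm:
\begin{align*}
\frac{1}{2}\frac{d}{dt}\Vert u(t)\Vert_{H^1(1+\rho)}^2 = \lvs \mathscr{L}_t u(t), u(t)\rv_{L^2(1+\rho)} + \lvs \partial_x \mathscr{L}_t u(t), \partial_x u(t)\rv_{L^2(1+\rho)}.
\end{align*}
For the first term, the coercivity computation already carried out in the proof of Theorem \ref{thm:solu} yields
\begin{align*}
\lvs \nu\Delta u, u\rv_{L^2(1+\rho)} \le -\nu\Vert u\Vert_{H^1(1+\rho)}^2 + \bigl(\nu + \tfrac{c^2}{2\nu}\bigr)\Vert u\Vert_{1+\rho}^2,
\end{align*}
together with $\lvs q(t,\cdot)u,u\rv_{L^2(1+\rho)}\le b\eta_1 \Vert u\Vert_{1+\rho}^2$ by (\textbf{B1}). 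For the second term, differentiate $\mathscr{L}_t u = \nu\Delta u + q(t,\cdot)u$ in $x$ and use the same weighted integration by parts on the Laplacian acting on $u_x$, which produces $-\nu\Vert u_{xx}\Vert_{1+\rho}^2$ plus lower-order terms controlled by $\Vert u_x\Vert_{1+\rho}^2$. The multiplicative contribution splits as $\partial_x(q u) = q u_x + q_x u$, and bothpieces pair against $u_x$ in $L^2(1+\rho)$ to quantities bounded by $\Vert q\Vert_\infty \Vert u_x\Vert_{1+\rho}^2 + \Vert q_x\Vert_\infty \Vert u\Vert_{1+\rho}\Vert u_x\Vert_{1+\rho}$ via Cauchy--Schwarz.

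Collecting, all the negative $H^2$-type terms can be discarded and the remaining terms are bounded by $C\Vert u(t)\Vert_{H^1(1+\rho)}^2$ for a constant $C$ depending only on $\nu,c,b,\eta_1,\eta_3$ and $\Vert\hat{v}_x\Vert_\infty$. Setting $L_*:=C$ and applying Gronwall's lemma yields
\begin{align*}
\Vert u(t)\Vert_{H^1(1+\rho)}^2 \le e^{2L_*(t-s)}\Vert h\Vert_{H^1(1+\rho)}^2,
\end{align*}
which, after taking square roots and redefining $L_*$, gives \eqref{evol}. The estimate extends from the dense set of regular data to all $h\in H^1(1+\rho)$ by continuous extension of $P_{t,s}$, and the evolution property $P_{t,s}P_{s,r}=P_{t,r}$, $P_{s,s}=\text{id}$, together with strong continuity in $(t,s)$, follows from the uniqueness of the Cauchy problem.

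The main obstacle is that $\mathscr{L}_t$ is only known to map $H^3(1+\rho)$ to $H^1(1+\rho)$, so one must justify the energy identity on a suitable dense subset of regular solutions before passing to the general case; this is handled either by a Galerkin approximation against a basis of $H^1(1+\rho)$ adapted to $\nu\Delta$, or by a standard mollification argument exploiting the fact that $q$ and $q_x$ are bounded multipliers on the weighted Sobolev scale.
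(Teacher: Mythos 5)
Your argument is correct in substance, but it reaches the quantitative bound \eqref{evol} by a different route than the paper. Both proofs share the structural starting point that $\lin_t=\nu\Delta+B(t)$ with $B(t)u=bf'(\hat{v}(\cdot+ct))u$ a uniformly bounded multiplication operator on $H^1(1+\rho)$ (bounded because $\hat{v}\in[0,1]$, $f\in C^2$ and $\hat{v}_x$ is bounded), and both lean on the Pazy framework for non-autonomous equations. The paper, however, obtains the exponential bound essentially for free from abstract stability theory: it first verifies by the explicit Gaussian-kernel computation $p_t(1+\rho)=1+Ze^{-\frac{c}{\nu}x+\frac{c^2}{\nu}t}$ (equation \eqref{ptrho}) that the heat semigroup is quasi-contractive on $H^1(1+\rho)$, so that $(\lin_t)$ is a stable family of generators and \cite[Ch.~5, Thm.~2.3]{Pa} directly yields the evolution family together with the growth estimate. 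You instead re-derive the bound by an a priori energy identity for $\frac{d}{dt}\Vert u(t)\Vert_{H^1(1+\rho)}^2$, reusing the weighted coercivity estimate from Theorem \ref{thm:solu} for $u$ and for $u_x$, and closing with Gronwall. Your computation is sound: the cross term $-\nu\int u_{xx}u_x\rho_x\,dx$ is absorbed exactly as in the coercivity estimate, and $\partial_x(qu)=qu_x+q_xu$ is controlled since $q_x=bf''(\hat{v})\hat{v}_x$ is bounded. What your approach buys is independence from the explicit form of the weight (only $|\rho_x|\le \frac{c}{\nu}\rho$ is used), at the cost of the regularity issue you correctly flag — the energy identity needs $u(t)\in H^3(1+\rho)$ or a Galerkin/mollification detour, whereas the paper's semigroup computation requires no regularity of the solution at all. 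Either remedy you propose is standard and workable, so I regard the proof as complete modulo that routine approximation step.
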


\begin{proof}

We first show that the Gaussian semigroup $p_t = e^{t\nu\Delta}$, $t\ge 0$, acts on $H^1(1+\rho)$:  For $u\in H^1(1+\rho)$ 
\begin{align*}
\int (p_tu)^2(x) (1+\rho(x))\, dx \leq \int p_t(u^2)(x) (1+\rho(x)) \, dx = \int u^2 (p_t(1+\rho))(x) \, dx
\end{align*}
using the symmetry of the semigroup. Together with \eqref{ptrho} this enables us to obtain the bounds 
\begin{align*}
\Vert p_tu\Vert_{1+\rho}^2 \leq e^{\frac{c^2}{\nu} t } \Vert u\Vert_{1+\rho}^2
\end{align*}
as well as 
\begin{align*}
\Vert \partial_x p_tu \Vert_{1+\rho}^2&= 
\int (\partial_x p_tu)^2 (1+\rho) \, dx = \int p_t( u_x)^2 (1+\rho)\, dx \leq e^{ \frac{c^2}{\nu}t } \int ( u_x)^2 (1+\rho) \, dx \\
&= e^{ \frac{c^2}{\nu} t } \Vert  u_x \Vert_{1+\rho}^2\, . 
\end{align*}
Set $B(t)u:= b f'(\hat{v}(\cdot +ct))u$, i.e. $\lin_t=\nu \Delta+B(t)$. Defined like this,  $(B(t))_{t\in[0,T]}$ is indeed a family of bounded perturbations of $\nu \Delta$ with
\begin{align*}
\sup_{\Vert u\Vert=1}\Vert B(t)u\Vert_{H^1(1+\rho)}^2 \leq b^2 (\Vert f'(\hat{v})\Vert^2_\infty + \Vert f''(\hat{v})\Vert_\infty^2\Vert \hat{v}_x\Vert_\infty^2)
\end{align*}
uniformly in $t\in[0,T]$. Hence, by \cite[Ch. 5, Theorem 2.3]{Pa} $\lin_t=\nu \Delta+B(t)$ is a stable family of infinitesimal generators. In particular, there exists $L_\ast > 0$ such that the associated evolution family satisfies \eqref{evol}.
\end{proof}

Since $u$ is a variational solution on $H^1(1+\rho)$ it can also be represented as a mild solution
\begin{align*}
u(t)=\eps P_{t,0} \eta + \int_0^t P_{t,s} R(s,u(s)) \, ds
+ \eps \int_0^t P_{t,s} dW(s)
\end{align*}
Conditions for variational solutions to satisfy a mild solution representation are stated in \cite{PreRo}.

\section{Multiscale Analysis}
\subsection{Dynamical equation for phase-adaptation} \label{sec:phase}
To establish a description of the noise-induced phase shift of the wave profile arising due to the nonlinearity of the system the idea is to determine the stochastic phase by dynamically matching the deterministic profile $\hat{v}$ with the stochastic solution $v$. This matching is achieved by minimising the \mbox{$L^2$-distance}
\begin{align}\label{min}
C\mapsto \Vert v(\cdot,t)-\hat{v}(\cdot+ct+C)\Vert_\rho
\end{align}
over all possible phases $C$.
 Again, it turns out to be of advantage to work in the weighted measure space $L^2(\rho)$, where the measure will now be moved with the wave. The following dynamics are designed to point in the direction of the negative gradient of \eqref{min}. Let $m>0$ and consider the (pathwise) ODE
\begin{align}\label{eq:Ct}
\dot{C}^m(t) &= m \, B(t,C^m(t)),\;\; t\in [0,T]\\ 
 C(0) &=0 \notag
\end{align}
where
\begin{align*}
B(t,C) &= \lb   v(t,\cdot)-\hat{v}(\cdot+ct+C), \hat{v}_x(\cdot +ct+C)\rb_{\rho(\cdot+ct+C)}\\
&=\lb   v(t,\cdot)-\hat{v}(\cdot+ct+C), \Psi(\cdot +ct+C)\rb
\end{align*}

Equation \eqref{eq:Ct} can be regarded as an alternative approach to the phase conditions specified by certain algebraic constraints in the classical stability analysis (refer to \cite{Henry}) and has also been introduced in \cite{KrSt},\cite{Lang}, \cite{Thuemmler} and \cite{Stannat14}.

\begin{proposition}[Well-posedness]
$P$-almost surely there exists a unique adapted solution \newline
\mbox{$C\in C^1([0,T])$} of the (pathwise) ODE \eqref{eq:Ct}.
\end{proposition}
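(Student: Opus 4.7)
The plan is to treat \eqref{eq:Ct} as a pathwise ODE and apply Picard--Lindel\"of locally, combined with a linear-growth bound in $C$ that rules out finite-time blow-up; adaptedness will be inherited from the Picard iteration.

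The core quantitative step is to derive, for $P$-a.e.\ $\omega$, a linear-growth bound and a local Lipschitz bound in $C$ on the functional $B(t,C)$, uniformly in $t\in[0,T]$. Writing $v(t)=u(t)+\hat{v}(\cdot+ct)$ I would split
\[
B(t,C)=\lb u(t),\Psi(\cdot+ct+C)\rb+\lb \hat{v}(\cdot+ct)-\hat{v}(\cdot+ct+C),\Psi(\cdot+ct+C)\rb\, .
\]
The first inner product is bounded by $\Vert u(t)\Vert_{L^2(\R)}\Vert\Psi\Vert_{L^2(\R)}$, which is pathwise finite on $[0,T]$ by Theorem \ref{thm:solu} and the embedding $L^2(1+\rho)\hookrightarrow L^2(\R)$. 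For the second, I would use
\[
\hat{v}(x+ct)-\hat{v}(x+ct+C)=-\int_0^C\hat{v}_x(x+ct+s)\,ds
\]
and Minkowski's integral inequality to obtain $\Vert \hat{v}(\cdot+ct)-\hat{v}(\cdot+ct+C)\Vert_{L^2(\R)}\le|C|\,\Vert\hat{v}_x\Vert_{L^2(\R)}$; here $\hat{v}_x\in L^2(\R)$ follows from the exponential decay of $\hat{v}_x$ at $\pm\infty$ which is implicit in the proof of Lemma \ref{lem1_0}. Together this yields $|B(t,C)|\le K_1(\omega,T)+K_2|C|$. Differentiating in $C$ and using translation invariance of the $L^2$-pairing together with the normalisation $\lb\hat{v}_x,\Psi\rb=1$ gives
\[
\partial_C B(t,C)=-1+\lb u(t)+\hat{v}(\cdot+ct)-\hat{v}(\cdot+ct+C),\Psi_x(\cdot+ct+C)\rb\, ,
\]
which after the same splitting is bounded by $K_3+K_4|C|$ pathwise, using $\Psi_x\in L^2(\R)$ (Assumption \ref{ass:eigenfct}). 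Hence $C\mapsto B(t,C)$ is locally Lipschitz uniformly in $t\in[0,T]$, and joint continuity of $(t,C)\mapsto B(t,C)$ follows from $u\in C([0,T];L^2(\R))$ (Proposition \ref{prop:ex}), the strong $L^2(\R)$-continuity of the translation group acting on $\Psi\in H^1(\R)$, and the same $L^2$-bound on the difference of translates of $\hat{v}$.

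With these estimates at hand, the pathwise Picard--Lindel\"of theorem yields local existence and uniqueness of a $C^1$-solution, and Gr\"onwall's inequality applied to $|C^m(t)|\le m\int_0^t(K_1+K_2|C^m(s)|)\,ds$ excludes finite-time explosion and extends the solution to all of $[0,T]$. Adaptedness is inherited from the Picard iteration $C^{m,(n+1)}(t)=m\int_0^t B(s,C^{m,(n)}(s))\,ds$: each iterate is $(\F_t)$-adapted because $v$ is adapted and $B$ is a deterministic continuous functional of $v(s)$ and $C$, and the uniform limit of adapted processes is adapted.

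The main obstacle is that $\hat{v}\notin L^2(\R)$, so $v(t)-\hat{v}(\cdot+ct+C)$ cannot be estimated in $L^2(\R)$ by naively pairing $v$ and $\hat{v}(\cdot+ct+C)$ separately. The remedy is to extract the $L^2(\R)$-valued fluctuation $u$ and to absorb the non-$L^2$ piece into the telescoping difference of translates of $\hat{v}$, whose $L^2(\R)$-norm is linear in $|C|$ with coefficient $\Vert\hat{v}_x\Vert_{L^2(\R)}$. This is what makes the linear-growth bound on $B$ survive and guarantees existence on the full interval $[0,T]$.
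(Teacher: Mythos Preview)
Your argument is correct. The strategy of establishing joint continuity, local Lipschitz continuity in $C$ via the explicit formula for $\partial_C B$, and a linear growth bound $|B(t,C)|\le K_1(\omega,T)+K_2|C|$ is sound, and together with Picard--Lindel\"of and Gr\"onwall it yields a unique global $C^1$-solution on $[0,T]$. The adaptedness via Picard iterates is also standard and fine. One minor remark: for $u\in C([0,T];L^2(\R))$ you only need Theorem~\ref{thm:solu} (via $L^2(1+\rho)\hookrightarrow L^2(\R)$); invoking Proposition~\ref{prop:ex} is unnecessary here.

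The paper takes a shorter route: instead of bounding $\partial_C B$ and then separately arguing non-explosion, it estimates the increment $B(t,C_1)-B(t,C_2)$ directly and, for the part coming from the translates of $\hat{v}$, uses translation invariance of the Lebesgue pairing to rewrite
\[
\lb \Psi(\cdot+ct+C_j),\,\hat{v}(\cdot+ct)-\hat{v}(\cdot+ct+C_j)\rb
=\lb \Psi(\cdot+ct),\,\hat{v}(\cdot+ct-C_j)-\hat{v}(\cdot+ct)\rb\, ,
\]
so that the difference collapses to $\lb \Psi(\cdot+ct),\,\hat{v}(\cdot+ct-C_1)-\hat{v}(\cdot+ct-C_2)\rb$, which is bounded by $\Vert\Psi\Vert_H\Vert\hat{v}_x\Vert_H\,|C_1-C_2|$. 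This yields a \emph{global} Lipschitz constant in $C$ (depending on $\omega$ only through $\Vert u\Vert_{C([0,T];H)}$), so global existence follows from Picard--Lindel\"of alone, without the separate linear-growth/Gr\"onwall step. Your approach is a bit more laborious because your bound on $\partial_C B$ still carries a term proportional to $|C|$; the translation trick removes precisely that term. Both arguments are valid; the paper's is more economical.
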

\begin{proof} With analogous arguments as in \cite{Stannat14} and \cite{KrSt} $B(t,C)$ is continuous in \mbox{$(t,C)\in[0,T]\times \mathbb{R}$}. It then suffices to show that $B(t,C)$ is Lipschitz continuous in $C$ with a Lipschitz constant independent of $t$: For $C_1, \, C_2 \in \R, \, t>0$

\begin{align*}
&B(t,C_1)-B(t,C_2)= \lb \Psi(\cdot+ct+C_1)-\Psi(\cdot+ct+C_2), v(t)-\hat{v}(\cdot+ct)\rb \\
&+ \lb \Psi(\cdot+ct+C_1), \hat{v}(\cdot+ct)-\hat{v}(\cdot+ct+C_1)\rb -\lb \Psi(\cdot+ct+C_2), \hat{v}(\cdot+ct)-\hat{v}(\cdot+ct+C_2)\rb \\
& = I + II+III , \text{ say.}
\end{align*}
With
\begin{align*}
\vert I\vert \leq \Vert \Psi(\cdot+ct+C_1)-\Psi(\cdot+ct+C_2)\Vert_H \, \Vert u(t)\Vert_H
\leq \Vert \Psi_x\Vert_H \vert C_1-C_2\vert\,  \Vert u\Vert_{C([0,T];H)}
\end{align*}
and 
\begin{align*}
\vert II+III\vert & = \vert \lb \Psi(\cdot+ct), \hat{v}(\cdot+ct-C_1)-\hat{v}(\cdot+ct)\rb -\lb \Psi(\cdot+ct), \hat{v}(\cdot+ct-C_2)-\hat{v}(\cdot+ct)\rb\vert\\
&= \vert \lb \Psi(\cdot+ct), \hat{v}(\cdot+ct-C_1)-\hat{v}(\cdot+ct-C_2)\rb\vert\\
&\leq \Vert \Psi\Vert_H \, \Vert \hat{v}_x\Vert_H \, \vert C_1-C_2\vert
\end{align*}

Therefore, since $u$ is adapted and in $L^0(\O;C([0,T];H))$, there exists a unique adapted process $C\in L^0(\O;C^1([0,T]))$ that solves \eqref{eq:Ct}.
\end{proof}

\noindent Let $\gamma^m(t):= ct + C^m(t)$. The initial representation $v(t)=\hat{v}(\cdot+ct) + u(t)$ can now be replaced by 
\begin{align}
v(t)=\hat{v}(\cdot+\gamma^m(t))+u^m(t),
\end{align}
where $u^m:\O\times [0,T] \to H$ defined by
\[u^m(t) = u(t) + \hat{v}(\cdot+ct) - \hat{v}(\cdot + \gamma^m(t)) = v(t) - \hat{v}(\cdot + \gamma^m(t))\]
is an adapted process in $ L^2(\O;C([0,T];H))\cap L^2(\O\times (0,T);V)$.
 Moreover, $u^m$ is the unique variational solution of the equation
\begin{align}\label{eq:um}
du^m(t) &=  \Big[\nu \Delta u^m(t) +  {G}^m(t,u^m(t)) - (C^m)'(t)\, \hat{v}_x(\cdot +\gamma^m(t)) \Big]\,dt + \varepsilon \, d W(t),
\\ \notag
 & =\Big[\nu \Delta u^m(t)+ b f'(\hat{v}(\cdot+\gamma^m(t)))\, u^m(t) - m\lb \Psi(\cdot +\gamma^m(t)),  u^m(t)\rb \, \hat{v}_x(\cdot +\gamma^m(t))\Big]\,dt\\ \notag
&\quad+ {R}^m(t,u^m(t)) \, dt + \varepsilon \, d W(t)
\end{align}
with initial profile $u^m(0) = u^0$ and 
\begin{align*}
{G}^m (t,u) &=b\left[ f(u+\hat{v}(\cdot +\gamma^m(t)))  - f(\hat{v}(\cdot +\gamma^m(t)))\right]\\  {R}^m(t,u) &= {G}^m (t,u) -b \,f'(\hat{v}(\cdot+\gamma^m(t)))\, u.
\end{align*}
For the Nagumo equation, for instance, the remainder ${R}^m$ is explicitly given by
\[R^m(t,u) =\frac12 f''(\hat{v}(\cdot +\gamma^m(t))) u^2   + \frac16 f'''(\hat{v}(\cdot +\gamma^m(t)))u^3 = \frac12 ((1+a)-3\hat{v}(\cdot +\gamma^m(t)))\, u^2- u^3.\]

\subsection{Multiscale decomposition of the fluctuations}

For the subsequent analysis we demand higher regularity of the reaction function $f$ by assuming that $f\in C^3(\R)$. Let $\rho_t(x)=\rho(x+ct)$ and for $h\in C([0,T], H^1(1+\rho))$ set 
\[
\Vert h\Vert_T = \sup_{t\in [0,T]} \Vert h(t)\Vert_{H^1(1+\rho_t)}
\]
Likewise for $f\in C[0,T]$ define
\[
\vert f\vert_T = \sup_{t\in [0,T]} \vert f(t)\vert.
\]

\noindent Additionally, we again assume ({\bf A1}) - ({\bf A2}) ensuring that $\hat{v}_x\in H^1(1+\rho)$. 
\bigskip

We now derive an SDE for the stochastic perturbation $c^m(t):= \dot{C}^m(t)$ of the wave speed. Note that while the phase adaptation $C^m(t)$ is a process of bounded variation, the resulting adapted wave speed is of unbounded variation.

\begin{lemma}\label{lem:speed}
The dynamically adapted wave speed $c^m(t)= m \,\lb  u^m(t), \Psi(\cdot +\gamma^m(t))\rb$  solves the SDE
\begin{align}\label{eq:speed} 
c^m(t)&=c^m(0)+ m \int_0^t \left( -\, c^m(s)+  c^m(s) \lb u^m(s), \Psi_x(\cdot+\gamma^m(s)\rb +  \lvs {R}^m(s, u^m(s)), \Psi(\cdot + \gamma^m(s))\rv \right) ds\\ \notag
&\quad+ \eps m \int_0^t \lb \Psi(\cdot+\gamma^m(s)), dW(s)\rb\\ \notag
c^m(0)&= \eps m \lb \eta, \Psi\rb
\end{align}
\end{lemma}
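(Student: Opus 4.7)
The strategy is to apply an It\^o-type product rule to the real-valued process
$$t\mapsto m\langle u^m(t),\Psi_t\rangle,\qquad \Psi_t:=\Psi(\cdot+\gamma^m(t)),$$
whose value is by definition $c^m(t)$. Since $\gamma^m\in C^1([0,T])$ pathwise with $\dot\gamma^m(t)=c+c^m(t)$, the shifted eigenfunction $t\mapsto\Psi_t$ is a pathwise $C^1$ curve in $V$ (using $\Psi\in V$ from Assumption \ref{ass:eigenfct}), with $H$-derivative $\dot\gamma^m(t)\,\Psi_x(\cdot+\gamma^m(t))$. Combining the variational It\^o formula for $u^m$ from \eqref{eq:um} with the chain-rule contribution coming from $\Psi_t$ gives
\begin{align*}
d\langle u^m(t),\Psi_t\rangle
={}_{V^*}\langle du^m(t),\Psi_t\rangle_V + (c+c^m(t))\,\langle u^m(t),\Psi_x(\cdot+\gamma^m(t))\rangle\,dt.
\end{align*}

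Inserting the linearised SDE \eqref{eq:um} into the first term, two algebraic identities collapse the drift into the desired shape. First, $\mathscr{L}^{\#,*}\Psi=0$, translated to arbitrary phase, yields $(\nu\Delta+bf'(\hat v(\cdot+\gamma^m(t))))\Psi_t=-c\,\Psi_x(\cdot+\gamma^m(t))$, so that moving the linear operator onto $\Psi_t$ in the $V^*$-$V$ pairing produces
\begin{align*}
{}_{V^*}\langle(\nu\Delta+bf'(\hat v(\cdot+\gamma^m(t))))u^m(t),\Psi_t\rangle_V
= -c\,\langle u^m(t),\Psi_x(\cdot+\gamma^m(t))\rangle.
\end{align*}
Second, the translation-invariant normalisation $\langle\hat v_x,\Psi\rangle=1$ collapses the phase-correction term:
\begin{align*}
\bigl\langle -m\langle\Psi_t,u^m(t)\rangle\,\hat v_x(\cdot+\gamma^m(t)),\,\Psi_t\bigr\rangle
= -m\langle u^m(t),\Psi_t\rangle = -c^m(t).
\end{align*}
Adding the remainder pairing ${}_{V^*}\langle R^m(t,u^m(t)),\Psi_t\rangle_V$, the It\^o martingale $\varepsilon\langle\Psi_t,dW(t)\rangle$, and the BV contribution $(c+c^m(t))\langle u^m(t),\Psi_x(\cdot+\gamma^m(t))\rangle\,dt$ above, the two occurrences of $\pm c\,\langle u^m,\Psi_x(\cdot+\gamma^m)\rangle$ cancel, leaving $c^m(t)\langle u^m(t),\Psi_x(\cdot+\gamma^m(t))\rangle$. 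Multiplication by $m$ produces exactly the SDE \eqref{eq:speed}, and the initial value $c^m(0)=\varepsilon m\langle\eta,\Psi\rangle$ is immediate from $u^m(0)=\varepsilon\eta$ and $\gamma^m(0)=0$.

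The main technical hurdle is the rigorous justification of It\^o's formula with a stochastically time-dependent test function $\Psi_t$, given that $u^m$ is only a variational solution on the Gelfand triple $V\hookrightarrow H\hookrightarrow V^*$ and that $\gamma^m$ itself depends on the solution through \eqref{eq:Ct}. A clean route is to approximate $\Psi_t$ by piecewise-constant-in-time test functions $\sum_k \Psi_{t_k}\,\mathbf{1}_{[t_k,t_{k+1})}(t)$, apply the standard variational It\^o formula of \cite{PreRo} on each subinterval, and pass to the limit using $\Psi,\Psi_x\in H$, the uniform pathwise bound $|\dot\gamma^m(t)|\le c+|c^m(t)|$, and the a priori estimate \eqref{eq:ito}. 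Because $\gamma^m$ is of finite variation, no cross-variation terms between $\Psi_t$ and $u^m$ arise, and the resulting formula reduces to the classical product rule between an It\^o process and a BV process.
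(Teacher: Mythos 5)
Your proposal is correct and follows essentially the same route as the paper: an It\^o product rule for $m\langle u^m(t),\Psi(\cdot+\gamma^m(t))\rangle$, the chain-rule term $(c+c^m)\langle u^m,\Psi_x(\cdot+\gamma^m)\rangle$, the zero-eigenvector identity for $\Psi$ (which you phrase as $(\nu\Delta+bf'(\hat v(\cdot+\gamma^m)))\Psi_t=-c\,\Psi_x(\cdot+\gamma^m)$, equivalent to the paper's regrouping via ${}_V\langle u^m(\cdot-\gamma^m),\lfa\Psi\rvs=0$), and the normalisation $\langle\hat v_x,\Psi\rangle=1$. Your additional remark on justifying It\^o's formula with the pathwise $C^1$, finite-variation test function $\Psi_t$ is a welcome precision that the paper leaves implicit.
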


\begin{proof}
Applying It\^o's lemma we obtain
\begin{align*}
c^m(t)-c^m(0)&= m \int_0^t \lvs \nu \Delta u^m(s)+ b f'(\hat{v}(\cdot+\gamma^m(s)))\, u^m(s), \Psi(\cdot+\gamma^m(s))\rv\, ds \\
&\quad - m \int_0^t \lb \, \lb \Psi(\cdot +\gamma^m(s)),  u^m(s)\rb \, \hat{v}_x(\cdot +\gamma^m(s)), \Psi(\cdot+\gamma^m(t))\rb\, ds \\
&\quad+ m \int_0^t \lvs {R}^m(s,u^m(s)), \Psi(\cdot+\gamma^m(s))\rv\, ds 
+ \eps m \int_0^t \lb dW(s) , \Psi(\cdot+\gamma^m(s))\, \rb  \\
&\quad + m \int_0^t\lb u^m(s), \Psi_x(\cdot+\gamma^m(s))(c+c^m(s))\rb\, ds\\
&=  m \int_0^t \lvs \nu \Delta u^m(s)+ b f'(\hat{v}(\cdot+\gamma^m(s)))\, u^m(s) - c\partial_x u^m(s), \Psi(\cdot+\gamma^m(s))\rv\, ds \\
&\quad -m\int_0^t c^m(s)\, ds+ m \int_0^t\lvs {R}^m(s,u^m(s)), \Psi(\cdot+\gamma^m(s))\rv\, ds  \\
&\quad + m \int_0^t c^m(s)\, \lb u^m(s), \Psi_x(\cdot+\gamma^m(s)\rb \, ds + \eps m \int_0^t\lb dW(s) , \Psi(\cdot+\gamma^m(s))\, \rb\\
&= m \int_0^t \lv u^m(s, \cdot -\gamma^m(s)), \lfa\Psi\rvs\, ds \\
&\quad -m\int_0^t c^m(s)\, ds + m \int_0^t c^m(s)\, \lb u^m(s), \Psi_x(\cdot+\gamma^m(s)\rb \, ds \\
&\quad + m \int_0^t \lvs {R}^m(s,u^m(s)), \Psi(\cdot+\gamma^m(s))\rv\, ds + \eps m \int_0^t\lb dW(s) , \Psi(\cdot+\gamma^m(s))\, \rb
\end{align*}
\end{proof}

In order to investigate the dynamics on different scales of the noise strength we first formally identify the highest order terms in \eqref{eq:speed} as well as \eqref{eq:um}. Expecting that both $C^m$ and $u^m$ are of order $\eps$ leads us to define $c^m_0$ to be the unique strong solution of
\begin{align}\label{eq:c0}
dc_0^m(t)&= -m \, c_0^m(t) \, dt + m \lb \Psi(\cdot+ct), dW(t) \rb, \quad t\in[0,T]\\\notag
c_0^m(0)&= m \lb \eta, \Psi \rb
\end{align}

and $u_0^m \in L^2(\O;C([0,T];H))\cap L^2(\O\times (0,T);V)$ to be the unique variational solution to
\begin{align}\label{eq:u0}
du_0^m(t)&= \left[\lin_t u_0^m(t) - c_0^m(t) \hat{v}_x(\cdot+ ct)\right] \, dt + dW(t) , \quad t\in[0,T]\\\notag
u_0^m(0)&= \eta
\end{align}
with $u_0^m \in L^2(\Omega, C([0,T], H^1(1+\rho)))$. This regularity holds true since $u_0^m$ has the mild solution representation
\[
u_0^m(t) = P_{t,0} \eta -\int_0^t P_{t,s} c_0^m(s) \hat{v}_x(\cdot+cs) \, ds + \int_0^t P_{t,s} dW(s)
\]
and $\hat{v}_x\in \ho$.
We define the first order phase adaptation by $C_0^m(t)= \int_0^t c_0^m(s) \, ds$ and the first order phase by $\gamma_0^m(t)=ct + \eps C_0^m(t)$. 
For $\eps>0 $ and $q\in [0,1]$ set
\begin{align}\label{tauqeps}
\tau_{q,\eps} = \inf\{t\in[0,T] : \, \Vert u(t) \Vert_{H^1(1+\rho_t)} \geq \eps^{1-q}\} 
\end{align}
where $u$ is the solution from Proposition \ref{prop:ex} and
\[
\tau_{q,\eps}^m=\inf\{t\in[0,T] : \vert C_0^m(t)\vert \geq \eps^{-q}\}
\]

\begin{theorem}\label{thm:firstorder}
Let $q<\frac 1 2 $. On $\{\tau_{q,\eps} \wedge \tau_{q,\eps}^m = T\}$ the stochastic travelling wave $v$ can be decomposed into
\[
v(t)=\hat{v}(\cdot+ct+\eps C_0^m(t))+ \eps u_0^m(t) + \eps r^m(t)
\]
with 
\[
\Vert r^m\Vert_T \leq \alpha(T) \eps^{1-2q}
\]
where the constant $\alpha(T)$ is independent of $m$ and $\eps$.
Moreover,
\[
\lim_{\eps \to 0} P[\tau_{q,\eps} \wedge \tau_{q,\eps}^m = T] =1
\]
\end{theorem}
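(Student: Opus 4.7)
The plan is to work in the coordinate $u^{\gamma}(t) := v(t) - \hat{v}(\cdot + ct + \eps C_0^m(t))$ centred on the first-order wavefront, so that the residual is naturally $r^m(t) := \eps^{-1}(u^{\gamma}(t) - \eps u_0^m(t))$ and the desired decomposition is just the rearrangement $u^{\gamma} = \eps u_0^m + \eps r^m$. Applying the chain rule to $\hat{v}(\cdot+ct+\eps C_0^m(t))$ together with the SPDE \eqref{RDE} and the travelling-wave identity \eqref{tw}, I derive
\[
du^{\gamma} = \bigl[\nu\Delta u^{\gamma} + b\bigl(f(u^{\gamma}+\hat{v}^{[\eps C_0^m]}) - f(\hat{v}^{[\eps C_0^m]})\bigr) - \eps c_0^m\,\hat{v}_x^{[\eps C_0^m]}\bigr]\,dt + \eps\,dW,
\]
with $\hat{v}^{[\xi]}(x):=\hat{v}(x+ct+\xi)$. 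Crucially, subtracting $\eps$ times \eqref{eq:u0} cancels the $\eps\,dW$ exactly, and a Taylor expansion of $f$ around $\hat{v}^{[\eps C_0^m]}$ produces the pathwise PDE
\[
\partial_t r^m = \nu\Delta r^m + bf'(\hat{v}^{[\eps C_0^m]})\,r^m + \eps^{-1}(N_1+N_2+N_3),
\]
where $N_1 = \eps b(f'(\hat{v}^{[\eps C_0^m]}) - f'(\hat{v}^{[0]}))\,u_0^m$ captures the recentring of the linearisation, $N_2 = -\eps c_0^m(\hat{v}_x^{[\eps C_0^m]} - \hat{v}_x^{[0]})$ the shift of $\hat{v}_x$, and $N_3 = b(f(u^{\gamma}+\hat{v}^{[\eps C_0^m]}) - f(\hat{v}^{[\eps C_0^m]}) - f'(\hat{v}^{[\eps C_0^m]})u^{\gamma})$ is the Taylor remainder controlled by $(\textnormal{\bf B3})$.

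On the event $\{\tau_{q,\eps}\wedge\tau_{q,\eps}^m=T\}$, the stopping-time definitions provide $\|u(s)\|_{H^1(1+\rho_s)}\le\eps^{1-q}$ and $\eps|C_0^m(s)|\le\eps^{1-q}$ uniformly in $s\in[0,T]$; together with Remark \ref{lem:rho}(ii) and $\hat{v}_x\in H^1(1+\rho)$, this also yields $\|u^{\gamma}(s)\|_{H^1(1+\rho_s)} \le C\eps^{1-q}$. Combining $(\textnormal{\bf B3})$, $(\textnormal{\bf B4})$ and the Sobolev embedding $\|\cdot\|_\infty \le C\|\cdot\|_{H^1(1+\rho_s)}$, one bounds $\eps^{-1}\|N_j(t)\|_{H^1(1+\rho_t)}$ by $C\eps^{1-2q}(1+\|u_0^m(t)\|_{H^1(1+\rho_t)}+|c_0^m(t)|)$. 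An energy estimate in the moving-weight $H^1(1+\rho_t)$ norm for $r^m$ --- where the time derivative of $\rho_t$ contributes an extra first-order term in $x$ that is absorbed into the dissipative quadratic part, exactly as in the coercivity computation in the proof of Theorem \ref{thm:solu} --- then yields a Gronwall-type inequality. Since the mild representation of $u_0^m$ under Assumption \ref{ass:noise}, and the explicit formula $c_0^m(t) = e^{-mt}c_0^m(0) + m\int_0^t e^{-m(t-s)}\langle\Psi(\cdot+cs),dW(s)\rangle$, render $\sup_{t\le T}(\|u_0^m(t)\|_{H^1(1+\rho_t)}+|c_0^m(t)|)$ almost surely finite, Gronwall yields the desired $\|r^m\|_T\le\alpha(T)\eps^{1-2q}$ with $\alpha(T)$ independent of $m$ and $\eps$.

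For the probability assertion I would handle the two stopping times separately. A strengthening of Theorem \ref{thm:solu} to an $H^1(1+\rho_t)$-moment bound --- obtainable by applying It\^o's formula to $\|u(t)\|_{H^1(1+\rho_t)}^2$ together with Assumption \ref{ass:noise} --- yields $E[\sup_{t\le T}\|u(t)\|^2_{H^1(1+\rho_t)}]\le C(T)\eps^2$, so Chebyshev gives $P(\tau_{q,\eps}<T)\le C(T)\eps^{2q}\to 0$. For $\tau_{q,\eps}^m$, the explicit OU-type formula for $c_0^m$ implies $E[|C_0^m(t)|^2]\le K(T,m)$ uniformly on $[0,T]$, whence $P(\tau_{q,\eps}^m<T)\le K(T,m)\eps^{2q}\to 0$. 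The main obstacle I anticipate is the energy step: closing the estimate in the moving $H^1(1+\rho_t)$ norm while tracking every error contribution at the precise order $\eps^{1-2q}$. The restriction $q<\tfrac12$ is essential precisely so that $\eps^{-1}\cdot(u^{\gamma})^2\sim\eps^{1-2q}\to 0$, i.e.\ so that the cubic Taylor remainder $N_3$ is genuinely of lower order than the linear approximation $\eps u_0^m$; the recentring terms $N_1,N_2$ sit at the comparable order $\eps^{1-q}$ and hence cause no additional trouble.
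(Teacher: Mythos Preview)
Your decomposition of the residual PDE and the identification of the three forcing terms $N_1,N_2,N_3$ is exactly right, and the overall strategy is the same as the paper's. However, there is a genuine gap concerning the $m$-independence of $\alpha(T)$.

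The problem is your term $N_2=-\eps\,c_0^m(t)\bigl(\hat{v}_x^{[\eps C_0^m]}-\hat{v}_x^{[0]}\bigr)$. Your bound $\eps^{-1}\|N_2(t)\|_{H^1(1+\rho_t)}\le C\eps^{1-2q}|c_0^m(t)|$ is correct, but feeding it into an energy/Gronwall estimate forces the final constant to contain $\int_0^T|c_0^m(s)|\,ds$ (or $\sup_{s\le T}|c_0^m(s)|$). Neither quantity is bounded uniformly in $m$: the Ornstein--Uhlenbeck process $c_0^m$ has second moment of order $m$, so $\int_0^T|c_0^m(s)|\,ds$ grows like $\sqrt m$. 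Your sentence ``render $\sup_{t\le T}(\|u_0^m(t)\|_{H^1(1+\rho_t)}+|c_0^m(t)|)$ almost surely finite'' is true for each fixed $m$, but the resulting $\alpha(T)$ then depends on $m$, contradicting the statement of the theorem.

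The paper circumvents this by working in the mild formulation $r^m(t)=\int_0^t P_{t,s}\bigl(r_1^m+r_2^m+r_3^m\bigr)(s)\,ds$ and, for the troublesome term (their $r_3^m$, your $N_2$), introducing the antiderivative
\[
R_3^m(t)=-\tfrac1\eps\bigl(\hat{v}(\cdot+\gamma_0^m(t))-\hat{v}(\cdot+ct)-\eps C_0^m(t)\hat{v}_x(\cdot+ct)\bigr),
\]
which satisfies $(\partial_t-c\partial_x)R_3^m=r_3^m$. An integration by parts in the Duhamel integral then replaces $c_0^m$ by $C_0^m$, and $R_3^m$ is a second-order Taylor remainder controlled by $\eps|C_0^m|^2\le\eps^{1-2q}$ on the event --- with no $c_0^m$ appearing at all. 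This is the missing idea needed to obtain $\alpha(T)$ independent of $m$; without it the argument does not close. (Your treatment of $N_1$ via $\|u_0^m\|_T$ is fine, since $u_0^m(t)=u_0(t)+\hat{v}_x(\cdot+ct)(C_0(t)-C_0^m(t))$ is bounded uniformly in $m$.)

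A secondary remark on the probability part: your proposed route via $E\bigl[\sup_{t\le T}\|u(t)\|_{H^1(1+\rho_t)}^2\bigr]\le C(T)\eps^2$ requires an $H^1$ second-moment bound for the \emph{nonlinear} equation that is not established in the paper (Proposition~\ref{prop:ex} gives only a pathwise bound). The paper instead bootstraps from the already-obtained estimate on $r^m$: if $\tau_{q,\eps}<T$, then at $t_0=\tau_{q,\eps}$ one has $\eps^{1-q}=\|u(t_0)\|_{H^1(1+\rho_{t_0})}\le\eps\|C_0^m\hat v_x+u_0^m\|+\alpha(T)\eps^{2-2q}$, so Markov's inequality applied to $C_0^m\hat v_x+u_0^m$ suffices. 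This avoids the extra nonlinear moment estimate entirely.
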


\begin{proof}
Let $\tilde{u}_0^m(t):= v(t)-\hat{v}(\cdot+\gamma_0^m(t))= u(t)+\hat{v}(\cdot+ct)-\hat{v}(\cdot+\gamma_0^m(t))$. By Taylor's formula there exists $\xi(t,x)$ with $\vert \xi(t,x)\vert \leq \eps \vert C_0^m(t)\vert$ uniformly in $x$ such that
\[
\tilde{u}_0^m(t)=u(t)+\hat{v}_x(\cdot +ct+\xi(t,\cdot))\,\eps C_0^m(t).
\]
Hence, on $\{\tau_{q,\eps} \wedge \tau_{q,\eps}^m = T\}$ using Remark \ref{lem:rho}
\begin{align*}
\Vert \tilde{u}_0^m(t)\Vert_{H^1(1+\rho_t)}&\leq \Vert u(t)\Vert_{H^1(1+\rho_t)}+ \eps \vert C_0^m(t)\vert \Vert \hat{v}_x(\cdot+ct+\xi(t))\Vert_{H^1(1+\rho_t)}\\
&\leq \eps^{1-q}+ \eps^{1-q}  \Vert \hat{v}_x\Vert_{H^1(1+\rho(\cdot- \xi(t)))}\\
&\leq \eps^{1-q}+ \eps^{1-q} (1\vee e^{M\eps^{1-q}})^\frac 12 \Vert \hat{v}_x\Vert_{H^1(1+\rho)}\leq C_1 \, \eps^{1-q}
\end{align*}

\noindent for a constant $C_1>1$. The remainder process
\[ 
r^m(t)=\frac 1 \eps \Big(v(t)-\hat{v}(\cdot +ct+\eps C_0^m(t))\Big)-u_0^m(t)
\]
is a variational solution of the following pathwise evolution equation:

\begin{align*}
dr^m(t)&= \lin_tr^m(t)\, dt \\
&\quad+ \frac b\eps\Big(f(\hat{v}(\cdot+\gamma_0^m(t))+\tilde{u}_0^m(t))-  f(\hat{v}(\cdot+\gamma_0^m(t)))-  f'(\hat{v}(\cdot+\gamma_0^m(t)))\, \tilde{u}_0^m(t)\Big)dt\\
&\quad + \frac b \eps \Big(f'(\hat{v}(\cdot+\gamma_0^m(t)))-f'(\hat{v}(\cdot+ct))\Big)\, \tilde{u}_0^m(t) \, dt\\
&\quad + c_0^m(t)\Big(\hat{v}_x(\cdot+ct)-\hat{v}_x(\cdot+\gamma_0^m(t))\Big) \, dt\\
&=: \big[\lin_tr^m(t)+r_1^m(t)+r_2^m(t)+r_3^m(t)\big]\, dt
\end{align*}
with $r^m(0)=0$. Thus, it can be represented as a mild solution
\[
r^m(t)=\int_0^t P_{t,s}\left(r_1^m(s)+r_2^m(s)+r_3^m(s)\right)\, ds \;\;\in H^1(1+\rho)\;
\]
Using \eqref{evol} and Remark \ref{lem:rho} (i) we estimate
\begin{align*}
&\Vert r^m(t)\Vert_{H^1(1+\rho_t)} \\
&\leq \int_0^t e^{L_*(t-s)}\big(\Vert r_1^m(s)\Vert_{H^1(1+\rho_s)}+\Vert r_2^m(s)\Vert_{H^1(1+\rho_s)}\big)\, ds
+\Big\Vert \int_0^t P_{t,s}r_3^m(s)\, ds\Big\Vert_{H^1(1+\rho_t)}
\end{align*}

\noindent The first part is bounded by applying condition $({\bf B3})$ as follows:

\begin{align*}
\frac \eps b \, \vert r_1^m(t,x)\vert &\leq \eta_2\,(1+\vert \tilde{u}_0^m(t,x)\vert) \, \vert \tilde{u}_0^m(t,x)\vert^2\\
&\leq \eta_2 \left(\Vert\tilde{u}_0^m(t)\Vert_{\infty}+
\Vert\tilde{u}_0^m(t)\Vert_{\infty}^2\right)\vert \tilde{u}_0^m(t,x)\vert\\
&\leq \eta_2 \left(\Vert\tilde{u}_0^m(t)\Vert_{H^1(1+\rho_t)}+
\Vert\tilde{u}_0^m(t)\Vert_{H^1(1+\rho_t)}^2\right)\vert \tilde{u}_0^m(t,x)\vert
\end{align*}

\noindent Therefore, on $\{\tau_{q,\eps} \wedge \tau_{q,\eps}^m = T\}$

\begin{align*}
\Vert r_1^m(t)\Vert_{1+\rho_t}\leq  b \, \eta_2 \,(C_1 \eps^{1-q}+C_1^2 \eps^{2-2q})\, C_1 \eps^{-q}\leq 2 b\, \eta_2 \,C_1^3\, \eps^{1-2q}
\end{align*}
Furthermore, using Taylor's formula there exists an intermediate point $\xi(t,x)$ with \newline
\mbox{$\vert \xi(t,x)\vert \leq \vert \tilde{u}_0^m(t,x)\vert$} such that
\begin{align*}
\partial_x r_1^m(t,x)&=\frac b {2\eps} \, f'''\big(\hat{v}(x+\gamma_0^m(t))+\xi(t,x)\big)\,  \tilde{u}_0^m(t,x)^2\, \hat{v}_x(x+\gamma_0^m(t))\\
&\quad +\frac b \eps \Big(f'( \hat{v}(x+\gamma_0^m(t))+\tilde{u}_0^m(t,x))-f'(\hat{v}(x+\gamma_0^m(t)))\Big) \partial_x\tilde{u}_0^m(t,x)
\end{align*}

Note that even though $f''$ and $f'''$ are not assumed to be globally bounded, we can control the above expression using local bounds on $\{\tau_{q,\eps} \wedge \tau_{q,\eps}^m = T\}$. Since $\hat{v}\in[0,1]$ and 
$$\vert \xi(t,x)\vert \leq \vert \tilde{u}_0^m(t,x)\vert 
\leq \Vert \tilde{u}_0^m(t)\Vert_{H^1(1+\rho_t)}\leq C_1 \eps^{1-q}<1 $$
for $\eps$ small enough, we know that $\hat{v}(x)+\tilde{u}_0^m(t,y) \in[-1,2] $ for all $t>0,\, x,y\in\R$, and therefore
\begin{align*}
\Vert \partial_x r_1^m(t)\Vert_{1+\rho_t}&\leq \frac b{2\eps} \Vert f'''\Vert_{\infty,[-1,2]}\Vert \hat{v}_x\Vert_{\infty}\Vert \tilde{u}_0^m(t)\Vert_{H^1(1+\rho_t)}\Vert \tilde{u}_0^m(t)\Vert_{1+\rho_t}\\
&\quad + \frac b\eps \Vert f''\Vert_{\infty,[-1,2]} \Vert \tilde{u}_0^m(t)\Vert_{H^1(1+\rho_t)}\Vert \partial_x \tilde{u}_0^m(t)\Vert_{1+\rho_t}\\
&\leq \frac b{2} \Vert f'''\Vert_{\infty,[-1,2]}\Vert \hat{v}_x\Vert_{\infty}\, C_1^2 \, \eps^{1-2q}+  b \Vert f''\Vert_{\infty,[-1,2]} \, C_1^2 \, \eps^{1-2q}\\
&= C_1^2\, b\left(\frac 1 2 \Vert f'''\Vert_{\infty,[-1,2]}\Vert \hat{v}_x\Vert_{\infty}+\Vert f''\Vert_{\infty,[-1,2]}\right) \, \eps^{1-2q}
\end{align*}

\noindent The second part can be controlled by
\begin{align*}
\left\vert r^m_2(t,x)\right\vert &\leq \frac b \eps\Vert f''\Vert_{\infty,[0,1]} \vert \hat{v}(x+\gamma_0^m(t))-\hat{v}(x+ct)\vert \, \vert \tilde{u}_0^m(t,x)\vert\\
&\leq b \, \Vert f''\Vert_{\infty,[0,1]} \Vert \hat{v}_x\Vert_{\infty} \vert C_0^m(t)\vert\, \vert \tilde{u}_0^m(t,x)\vert
\end{align*}
and
\begin{align*}
\vert \partial_x r_2^m(t,x)\vert &\leq \frac b\eps \Big\vert f''(\hat{v}(x+\gamma_0^m(t)))\hat{v}_x(x+\gamma_0^m(t)))-f''(\hat{v}(x+\gamma_0^m(t)))\hat{v}_x(x+ct)\Big \vert\, \vert \tilde{u}_0^m(t,x)\vert\\
&\quad +\frac b\eps \Big\vert f''(\hat{v}(x+\gamma_0^m(t)))\hat{v}_x(x+ct))-f''(\hat{v}(x+ct))\hat{v}_x(x+ct)\Big \vert \,\vert \tilde{u}_0^m(t,x)\vert\\
&\quad + \frac b \eps \Big\vert f'(\hat{v}(x+\gamma_0^m(t)))-f'(\hat{v}(x+ct))\Big \vert \, \vert \partial_x \tilde{u}_0^m(t,x)\vert\\
&\leq \frac b\eps \Vert f''\Vert_{\infty,[0,1]} \vert \hat{v}_x(x+\gamma_0^m(t))-\hat{v}_x(x+ct)\vert \, \vert \tilde{u}_0^m(t,x)\vert\\
&\quad + \frac b \eps \Vert \hat{v}_x\Vert_{\infty} \vert f''(\hat{v}(x+\gamma_0^m(t)))-f''(\hat{v}(x+ct))\vert \, \vert \tilde{u}_0^m(t,x)\vert\\
&\quad + \frac b\eps \Vert f''\Vert_{\infty, [0,1]} \vert \hat{v}(x+\gamma_0^m(t))-\hat{v}(x+ct)\vert \, \vert \partial_x\tilde{u}_0^m(t,x)\vert\\
&\leq  b  \left( \Vert f''\Vert_{\infty,[0,1]} \Vert \hat{v}_{xx}\Vert_{\infty}+\Vert f''\Vert_{\infty,[0,1]} \Vert \hat{v}_x\Vert_{\infty}^2  \right) \vert C_0^m(t)\vert\,\vert \tilde{u}_0^m(t,x)\vert\\
&\quad +  b \Vert f''\Vert_{\infty,[0,1]} \Vert \hat{v}_x\Vert_{\infty}  \vert C_0^m(t)\vert \, \vert \partial_x\tilde{u}_0^m(t,x)\vert
\end{align*}

\noindent such that on $\{\tau_{q,\eps} \wedge \tau_{q,\eps}^m = T\}$

\begin{align*}
\Vert r_2^m(t)\Vert_{H^1(1+\rho_t)}^2&=\Vert r_2^m(t)\Vert_{1+\rho_t}^2+\Vert \partial_x r_2^m(t)\Vert_{1+\rho_t}^2\\
&\leq b^2 \left( \Vert f''\Vert_{\infty,[0,1]}^2 \Vert \hat{v}_x\Vert_{\infty}^2 + 2 \Vert f''\Vert_{\infty,[0,1]}^2 \Vert \hat{v}_{xx}\Vert_{\infty}^2+ 2 \Vert f''\Vert_{\infty,[0,1]}^2 \Vert \hat{v}_x\Vert_{\infty}^4\right) \eps^{-2q}\, \Vert \tilde{u}_0^m(t)\Vert^2_{1+\rho_t} \\
&\quad +  b^2 \Vert f''\Vert_{\infty,[0,1]}^2 \Vert \hat{v}_x\Vert_{\infty}^2  \eps^{-2q} \, \Vert \partial_x\tilde{u}_0^m(t)\Vert^2_{1+\rho_t}\\
&\leq  b^2 \Vert f''\Vert_{\infty,[0,1]}^2 \left(  \Vert \hat{v}_x\Vert_{\infty}^2 + 2 \Vert \hat{v}_{xx}\Vert_{\infty}^2+ 2 \Vert \hat{v}_x\Vert_{\infty}^4\right)C_1^2 \, \eps^{2-4q}\\
&=:C_2^2 \, \eps^{2-4q}
\end{align*}
For the last part set
\[
R_3^m(t):= - \frac 1\eps \left(\hat{v}(\cdot+\gamma_0^m(t))-\hat{v}(\cdot +ct)-\eps C_0^m(t)\hat{v}_x(\cdot+ct)\right)
\]
satisfying
\[
\left(\frac d{dt}-c\partial_x\right)R_3^m(t)=r_3^m(t).
\]
Thus, we obtain
\begin{align*}
\int_0^t P_{t,s}r_3^m(s) \, ds = R_3^m(t)+\int_0^t P_{t,s}(\lin_s-c\partial_x)R_3^m(s)\, ds
\end{align*}
and
\begin{align*}
\bigg\Vert \int_0^t P_{t,s}r_3^m(s) \, ds\bigg\Vert_{H^1(1+\rho_t)}
&\leq \Vert R_3^m(t)\Vert_{H^1(1+\rho_t)}+ \int_0^t \Vert P_{t,s}(\lin_s-c\partial_x)R_3^m(s)\Vert_{H^1(1+\rho_s)} \, ds\\
&\leq \Vert R_3^m(t)\Vert_{H^1(1+\rho_t)}\\
&\quad + \int_0^t e^{L_*(t-s)}\Vert\lin_s-c\partial_x\Vert_{\lin(H^3(1+\rho_s), H^1(1+\rho_s))}\, \Vert R_3^m(s)\Vert_{H^3(1+\rho_s)} \, ds\\
&= I+II
\end{align*}
By Taylor's theorem there exists $\xi(t,x)$ with $\vert \xi\vert \leq \eps \vert C_0^m\vert$ such that the order of the first summand can be estimated by
\begin{align*}
\Vert R_3^m(t)\Vert_{H^1(1+\rho_t)}&\leq \frac\eps 2\,  \vert C_0^m(t)\vert^2 \Vert \hat{v}_{xx}(\cdot+ct+\xi(t))\Vert_{H^1(1+\rho_t)}\\
&\leq \frac 12 (1\vee e^{M\eps^{1-q}})^{\frac 12} \Vert \hat{v}_{xx}\Vert_{H^1(1+\rho)}\, \eps^{1-2q} =:C_3 \, \eps^{1-2q}
\end{align*}
For estimating the order of the second summand one needs to control also higher derivatives of $R_3^m$:
\begin{align*}
\Vert R_3^m(t)\Vert_{H^3(1+\rho_t)}^2 = \Vert R_3^m(t)\Vert_{H^1(1+\rho_t)}^2 + \Vert \partial_{xx}R_3^m(t)\Vert_{L^2(1+\rho_t)}^2 +\Vert \partial_{xxx} R_3^m(t)\Vert_{L^2(1+\rho_t)}^2
\end{align*}
with
\begin{align*}
&\eps \partial_{xx} R_3^m(t)= \hat{v}_{xx}(\cdot +ct+\eps C_0^m(t))-\hat{v}_{xx}(\cdot+ct)-\eps C_0^m(t)\hat{v}_{xxx}(\cdot+ct)\\
&\eps \partial_{xxx} R_3^m(t)= \hat{v}_{xxx}(\cdot +ct+\eps C_0^m(t))-\hat{v}_{xxx}(\cdot+ct)-\eps C_0^m(t)\hat{v}_{xxxx}(\cdot+ct)
\end{align*}
Note that differentiating \eqref{tw} yields
\[
\nu\, \hat{v}_{xxxxx}= c\hat{v}_{xxxx} -f'(\hat{v}) \, \hat{v}_{xxx} - 3 f''(\hat{v}) \, \hat{v}_{xx}\, \hat{v}_{x} -f'''(\hat{v})\hat{v}_x^3
\]
implying that $\hat{v}\in C^5$ if $f\in C^3$. Again applying Taylor's theorem there exist $\xi_1(t), \xi_2(t)$ with $\vert \xi_{1,2}(t)\vert \leq \eps \vert C_0^m(t)\vert$ such that
\begin{align*}
\Vert \partial_{xx}R_3^m(t)\Vert_{L^2(1+\rho_t)}&\leq \eps\,  \frac{\vert C_0^m(t)\vert^2}{2} (1\vee e^{M\eps^{1-q}})^{\frac 12}\Vert \hat{v}_{xxxx}\Vert_{1+\rho}\\
&\leq (1\vee e^{M\eps^{1-q}})^{\frac 12} \frac{\Vert\hat{v}_{xxxx}\Vert_{1+\rho}}{2}\,  \eps^{1-2q}\leq C_4 \, \eps^{1-2q}
\end{align*}

\noindent for a constant $C_4>0$ and

\begin{align*}
\Vert \partial_{xxx}R_3^m(t)\Vert_{L^2(1+\rho_t)}\leq \eps \, \frac{\vert C_0^m(t)\vert^2}{2} (1\vee e^{M\eps^{1-q}})^{\frac 12} \Vert \hat{v}_{xxxxx}\Vert_{L^2(1+\rho)} \leq C_5 \, \eps^{1-2q}
\end{align*}
with $C_5>0$. Altogether we obtain
\begin{align*}
II \leq C \sup_{t\in[0,T]} \Vert \lin_t-c\partial_x\Vert_{\lin(H^3(1+\rho_t), H^1(1+\rho_t))}\frac{e^{L_*t}-1}{L_*} \, \eps^{1-2q}
\end{align*}
for a constant $C>0$ independent of $\eps$. It remains to show that in the small-noise limit the above order estimate holds for $P$-almost all paths $\omega\in \Omega$. If $\tau_{q,\eps} < T$ then, due to continuity, for $t_0=:\tau_{q,\eps}(\omega)$ we obtain
\begin{align*}
\eps^{1-q}&=\Vert u(t_0)\Vert_{H^1(1+\rho_{t_0})}\\
&\leq \eps \Vert C_0^m(t_0) \hat{v}_x(\cdot +ct_0+\xi(t_0))+u_0^m(t_0)\Vert_{H^1(1+\rho_{t_0})} + \eps \Vert r^m(t_0)\Vert _{H^1(1+\rho_{t_0})}\\
&\leq \eps \Vert C_0^m(t_0) \hat{v}_x(\cdot +ct_0+\xi(t_0))+u_0^m(t_0)\Vert_{H^1(1+\rho_{t_0})} + \alpha(T)\eps^{2-2q}
\end{align*}
and therefore, using Markov's inequality
\begin{align*}
P[\tau_{q,\eps} < T]&\leq P[\, \Vert C_0^m(t_0) \hat{v}_x(\cdot +ct_0+\xi(t_0))+u_0^m(t_0)\Vert_{H^1(1+\rho_{t_0})} \geq \eps^{-q}(1-\alpha(T)\eps^{1-q})]\\
&\leq \frac {\eps^{2q}}{2(1-\alpha(T)\eps^{1-q})^2} 
\left( E\left[\vert C_0^m\vert_T^2\right] \,(1\vee e^{M \eps^{1-q}}) \Vert \hat{v}_x\Vert_{H^1(1+\rho)}^2  + E\left[\Vert u_0^m\Vert_T^2\right]\right)\\
&\longrightarrow 0\quad  \text{ as } \eps \to 0.
\end{align*}
Likewise, the second stopping time converges as follows
\begin{align*}
P[\tau_{q,\eps}^m < T]\leq P[\, \vert C_0^m\vert _T \geq \eps^{-q}]\leq \eps^{2q} E\left[\vert C_0^m\vert _T^2\right] \underset{\eps \to 0}{\longrightarrow} 0 .
\end{align*}
\end{proof}

\subsection{Immediate relaxation}

From the definition of the stochastic phase adaptation process $C^m$ it is clear that the initial goal of minimising the $L^2(\rho)$-distance between $v$ and $\hat{v}$ for every time $t\in[0,T]$ can only approximately be achieved when choosing a finite relaxation rate $m$. Although Theorem \ref{thm:firstorder} shows that already for finite $m$ a multiscale decomposition into processes of the expected order can be installed, we are interested in investigating the case of so-called immediate relaxation, i.e. the limit $m\to\infty$, and show that in that case indeed a rigorous minimisation (on relevant orders of the noise strength) is achieved. 
As an alternative to this description of a stochastic phase one could try to adapt the analysis developed in \cite{InLa} for a generalised framework of neural field equations, where the dynamics of the local minimum of \eqref{min} are explicitly described by an SDE up to a certain stopping time. This approach does not incorporate a gradient-descent procedure. As pointed out by the authors, studying instead the behaviour of the global minimum of \eqref{min} would be much more complicated, since its dynamics will be highly discontinuous. To our knowledge, this has not been investigated for bistable reaction-diffusion equations.
Below we will adapt the methods from \cite{Lang}. It will turn out that the limit phase adaptation is a process of unbounded variation behaving almost like a Brownian motion, which is in accordance with the phenomenological description of stochastic travelling waves in nonlinear systems developed in \cite{BrWe} for stochastic neural field equations.
\medskip

\noindent Let $\Pi_t$ denote the projection onto the orthogonal complement of $\hat{v}_x(\cdot+ct)$ in $L^2(\rho_t)$, i.e.
\[
\Pi_t h= h-\lb h, \hat{v}_x(\cdot+ct)\rb_{\rho(\cdot+ct)} \,\hat{v}_x(\cdot+ct)
\]

\begin{lemma} \label{lem:conv}
Define the processes $C_0$ and $u_0$ as
\begin{align*}
C_0(t)&=\lb \eta, \Psi\rb +  \int_0^t \lb \Psi(\cdot+cs)\, , \,dW(s)\rb \quad \text{ for } t>0\\
C_0(0)&=0
\end{align*}
and
\begin{align*}
u_0(t)&=P_{t,0}\Pi_0\eta + \int_0^t P_{t,s}\Pi_s dW(s)\quad \text{ for } t>0\\
u_0(0)&=\eta
\end{align*}
Then for any $\delta>0$ almost surely

\[
\sup_{\delta\leq t \leq T} \vert C_0^m(t)-C_0(t)\vert \underset{m\to \infty}{\longrightarrow} 0
\]
as well as
\[
\sup_{\delta\leq t \leq T}  \Vert u_0^m(t)-u_0(t)\Vert_{H^1(1+\rho_t)}\underset{m\to \infty}{\longrightarrow} 0
\]
\end{lemma}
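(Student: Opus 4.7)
The plan is to derive an exact, closed-form expression for $C_0^m(t)$ by variation of constants applied to the Ornstein--Uhlenbeck SDE \eqref{eq:c0}, and then to recognise the resulting process as a deterministic smoothing of the real-valued continuous martingale $\tilde M(s):=\int_0^s\langle\Psi(\cdot+cr),dW(r)\rangle$. Solving \eqref{eq:c0} explicitly, integrating in time, applying a stochastic Fubini and then an integration by parts in $s$ gives the representation
$$C_0^m(t)=\langle\eta,\Psi\rangle(1-e^{-mt})+m\int_0^t e^{-m(t-s)}\tilde M(s)\,ds.$$
Since $C_0(t)=\langle\eta,\Psi\rangle+\tilde M(t)$ for $t>0$, the error decomposes into a boundary-layer term bounded by $(|\langle\eta,\Psi\rangle|+|\tilde M|_T)e^{-m\delta}$ (this is where $t\ge\delta>0$ enters, reflecting the jump of $C_0$ at $t=0$) and a convolution error $\int_0^t me^{-mu}(\tilde M(t)-\tilde M(t-u))\,du$. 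Working pathwise on the full-measure set where $\tilde M(\cdot,\omega)$ is continuous on $[0,T]$, and splitting the integral at $u\sim m^{-1/2}$, the convolution error is bounded by $\omega_M(m^{-1/2})+2|\tilde M|_T e^{-\sqrt m}\to 0$ uniformly in $t\in[\delta,T]$, where $\omega_M$ is the modulus of continuity of $\tilde M$ on $[0,T]$.

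The second convergence reduces algebraically to the first through the key identity
$$P_{t,s}\,\hat v_x(\cdot+cs)=\hat v_x(\cdot+ct),\qquad 0\le s\le t\le T.$$
Indeed, $\mathscr L^\#\hat v_x=0$ (obtained by differentiating the travelling-wave equation $c\hat v_x=\nu\hat v_{xx}+bf(\hat v)$) translates in the moving frame into $\partial_\tau\hat v_x(\cdot+c\tau)=\mathscr L_\tau\hat v_x(\cdot+c\tau)$, so $\tau\mapsto\hat v_x(\cdot+c\tau)$ is a solution of the non-autonomous linear equation generated by $(\mathscr L_\tau)$, and uniqueness of the evolution family yields the identity.

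With this identity in hand everything in the $\hat v_x$-direction telescopes. Using $(I-\Pi_s)h=\langle h,\Psi(\cdot+cs)\rangle\hat v_x(\cdot+cs)$ and pulling the $s$-independent factor $\hat v_x(\cdot+ct)$ out of $P_{t,s}$, the mild representations collapse to
$$\int_0^t P_{t,s}(I-\Pi_s)\,dW(s)=\hat v_x(\cdot+ct)\,\tilde M(t),\qquad \int_0^t P_{t,s}\,c_0^m(s)\,\hat v_x(\cdot+cs)\,ds=\hat v_x(\cdot+ct)\,C_0^m(t),$$
together with $P_{t,0}(I-\Pi_0)\eta=\langle\eta,\Psi\rangle\hat v_x(\cdot+ct)$. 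Subtracting the mild forms of $u_0^m$ and $u_0$ and using $C_0(t)=\langle\eta,\Psi\rangle+\tilde M(t)$ for $t>0$ produces the clean pathwise equality
$$u_0^m(t)-u_0(t)=\hat v_x(\cdot+ct)\bigl(C_0(t)-C_0^m(t)\bigr).$$
Translation invariance of the weighted norm, $\Vert\hat v_x(\cdot+ct)\Vert_{H^1(1+\rho_t)}=\Vert\hat v_x\Vert_{H^1(1+\rho)}$, then converts the first uniform convergence into the second.

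The main obstacle is the pathwise smoothing step in the first paragraph: $L^2(\Omega)$-estimates alone would not deliver almost-sure uniform convergence, and the kernel $me^{-mu}$ must be tested against the merely continuous (nowhere differentiable) path $\tilde M$. What makes the argument work is the rewriting obtained by integration by parts, which turns the original stochastic integral $\int_0^t(1-e^{-m(t-s)})\,d\tilde M(s)$ into the Riemann convolution of the continuous path $\tilde M$ with the exponential approximate identity. Once this structural step is made, the uniform convergence on $[\delta,T]$ is a routine cut-off at $u\sim m^{-1/2}$, and the whole lemma follows from the algebraic collapse in the third paragraph.
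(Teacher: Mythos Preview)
Your proof is correct and follows essentially the same route as the paper: explicit integration of the Ornstein--Uhlenbeck SDE, conversion of the stochastic integral into a Riemann integral against a continuous path via integration by parts, a pathwise smoothing estimate, and the algebraic reduction of the $u_0^m$-convergence to the $C_0^m$-convergence through the identity $P_{t,s}\hat v_x(\cdot+cs)=\hat v_x(\cdot+ct)$, culminating in the same formula $u_0^m(t)-u_0(t)=\hat v_x(\cdot+ct)\bigl(C_0(t)-C_0^m(t)\bigr)$.

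The only noteworthy variation is in the smoothing step. The paper integrates by parts against $s\mapsto\langle\Psi(\cdot+cs),W(s)\rangle$, which produces an extra term involving $\Psi_x$ and then invokes H\"older continuity of $t\mapsto\langle\Psi(\cdot+ct),W(t)\rangle$ to obtain a rate $m^{-\beta}$. You instead integrate by parts against the martingale $\tilde M$ itself, so no $\Psi_x$ term appears, and you need only the modulus of continuity of $\tilde M$ together with the cut-off at $u\sim m^{-1/2}$. Your variant is marginally more elementary (no H\"older exponent, no $\Psi_x\in H$ needed for this step) while the paper's version gives an explicit rate; structurally, however, the two arguments are the same.
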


\begin{proof}
Integrating \eqref{eq:c0} yields
\[
c_0^m(t)=e^{-mt} m \lb \eta, \Psi\rb + \int_0^t e^{-m(t-s)} m \lb \Psi(\cdot+cs), dW(s)\rb
\]
and therefore
\begin{align}\label{c0m}
C_0^m(t)=(1-e^{-mt})\lb \eta, \Psi\rb + \int_0^t \left(1-e^{-m(t-s)}\right)\lb \Psi(\cdot+cs), dW(s)\rb
\end{align}
With this, the difference between the approximate relaxation and the immediate relaxation process is given by
\begin{align*}
C_0(t)-C_0^m(t)= e^{-mt}\lb \eta, \Psi\rb + \int_0^t e^{-m(t-s)}\lb \Psi(\cdot+cs), dW(s)\rb =: e^{-mt}\lb \eta, \Psi\rb + S_t
\end{align*}
For the martingale term an integration by parts leads to
\begin{align*}
S_t&= \lb \Psi(\cdot+ct), W(t)\rb - \int_0^t m e^{-m(t-s)}\lb \Psi(\cdot+cs), W(s)\rb +c\, e^{-m(t-s)}\lb \Psi_x(\cdot+cs), W(s)\rb \, ds
\end{align*}
Now, the function
$t \mapsto \lb \Psi(\cdot+ct), W(t)\rb$
is H\"older continuous for any $\beta<\frac 12$ almost surely, i.e.
\[
M_\beta(T,\omega):= \sup_{\vert t-s \vert \leq T} \frac{\big\vert \lb \Psi(\cdot+ct), W(t)\rb - \lb \Psi(\cdot+cs), W(s)\rb\big \vert}{\vert t-s\vert^\beta} < \infty \quad\text{  a.s.}
\]
Thus,
\begin{align*}
S_t&\leq c \,\bigg\vert \int_0^t e^{-m(t-s)} \lb \Psi_x(\cdot+cs), W(s)\rb \, ds \bigg\vert
+M_\beta(T,\omega) \int_0^t me^{-m(t-s)}(t-s)^\beta \, ds\\
&\quad +e^{-mt} \vert \lb \Psi(\cdot+ct, W(t)\rb \vert \\
&\leq \frac c m \, \Vert \Psi_x\Vert \sup_{0\leq t\leq T} \Vert W(t)\Vert+\frac{M_\beta(T,\omega)}{m^\beta}\, \Gamma(1+\beta)+ e^{-mt}\, \Vert \Psi\Vert \sup_{0\leq t\leq T}\Vert W(t)\Vert
\end{align*}
where $\Gamma$ denotes the Gamma function
\[
\Gamma(t)=\int_0^\infty x^{t-1}e^{-x} \, dx.
\]

\noindent Hence, 
\begin{align*}
&\sup_{\delta \leq t \leq T} \vert C_0(t)-C_0^m(t)\vert\\
&\qquad \leq e^{-m\delta} \Vert \Psi \Vert \left (\Vert \eta \Vert +\sup_{0\leq t \leq T} \Vert W(t)\Vert\right) + \frac{M_\beta(T,\omega)}{m^\beta}\, \Gamma(1+\beta)
+ \frac c m \, \Vert \Psi_x\Vert \sup_{0\leq t\leq T} \Vert W(t)\Vert\\
&\qquad \underset{m\to \infty}{\longrightarrow } 0 \quad \text{ a.s.}
\end{align*}

Note that since $\lin_t \hat{v}_x(\cdot+ct)=0$ we have
$P_{t,s} \hat{v}_x(\cdot+cs)=\hat{v}_x(\cdot+ct)$. Thus, the mild solution representation of the first-order fluctuations $u_0^m$ is given by
\begin{align*}
u_0^m(t)&=P_{t,0}\eta -\int_0^t c_0^m(s) P_{t,s}\hat{v}_x(\cdot+cs)\, ds + \int_0^t P_{t,s} \,dW(s)\\
&=P_{t,0} \Pi_0\eta + \lb \eta,\hat{v}_x\rb_{\rho}\, P_{t,0}\hat{v}_x - \int_0^t c_0^m(s) \hat{v}_x(\cdot +ct) \, ds + \int_0^t P_{t,s} \, dW(s)\\
&= P_{t,0} \Pi_0\eta + P_{t,0}\lb \eta, \Psi\rb\, \hat{v}_x -\hat{v}_x(\cdot+ct)\,  C_0^m(t)+\int_0^t P_{t,s}\Pi_s dW(s)\\
&\quad+\int_0^t P_{t,s} \hat{v}_x(\cdot+cs) \lb \Psi(\cdot+cs), dW(s)\rb\\
&= u_0(t)+\hat{v}_x(\cdot+ct)\left(\lb\eta,\Psi\rb + \int_0^t \lb \Psi(\cdot+cs), dW(s)\rb -C_0^m(t)\right)\\
&= u_0(t)+\hat{v}_x(\cdot+ct)\left(C_0(t)-C_0^m(t)\right)
\end{align*}
Using this we obtain
\begin{align*}
\sup_{\delta \leq t \leq T} \Vert u_0^m(t)-u_0(t)\Vert_{H^1(1+\rho_t)} \leq \sup_{\delta \leq t \leq T} \vert C_0^m(t)-C_0(t)\vert \, \Vert \hat{v}_x\Vert_{H^1(1+\rho)} \underset{m\to \infty}{\longrightarrow} 0 \quad \text{ a.s.}
\end{align*}
\end{proof}

Indeed, in contrast to \cite{InLa} where the existence of a phase-adaptation process has been shown up to a stopping time, Lemma \ref{lem:conv} provides us with effective formulae for the first-order stochastic phase-adaptation and fluctuations for all times $t\in[0,T]$. 
We now show that passing over to the limit from finite to immediate phase relaxation preserves the previous multiscale decomposition (Theorem \ref{thm:firstorder}).

\begin{theorem}\label{thm:limit}
Let $\tau_{q,\eps}$ be defined as in \eqref{tauqeps} and set
\[
\tau_{q,\eps}^\infty=\inf\{t\in[0,T] : \, \vert C_0(t)\vert \geq \eps^{-q}\} \wedge T
\]
Then on $\{\tau_{q,\eps}\wedge \tau_{q,\eps}^\infty=T\}$ the following multiscale decomposition of the stochastic travelling wave $v$ holds:
\[
v(t) = \hat{v}(\cdot +ct+\eps C_0(t))+\eps u_0(t) + \eps r(t)
\]
with
\[
\Vert r\Vert_T\leq \alpha(T)\, \eps^{1-2q}
\]
where $\alpha(T)$ is a positive constant.
Moreover, in the small-noise limit the above representation holds for almost every path $\omega\in \Omega$, i.e.
\[
P[\tau_{q,\eps}\wedge \tau_{q,\eps}^\infty=T]\underset{\eps \to 0}{\longrightarrow} 1.
\]
\end{theorem}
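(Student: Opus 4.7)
My plan is to derive Theorem \ref{thm:limit} from Theorem \ref{thm:firstorder} by passing to the limit $m\to\infty$, exploiting the convergence $C_0^m\to C_0$ and $u_0^m\to u_0$ established in Lemma \ref{lem:conv}. The key observation is that the constant $\alpha(T)$ produced in Theorem \ref{thm:firstorder} depends only on $T$, on $\hat{v}$ and on derivative bounds of $f$, but not on $m$, so the $m$-uniform bound $\Vert r^m\Vert_T\le \alpha(T)\eps^{1-2q}$ should be stable under the limit.

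First I would fix $\omega$ in the good event $\{\tau_{q,\eps}\wedge\tau_{q,\eps}^\infty=T\}$ and show that $\tau_{q,\eps}^m(\omega)=T$ for all $m$ sufficiently large. Using the explicit representation \eqref{c0m} together with the identity
\[
C_0(t)-C_0^m(t) = e^{-mt}\lb\eta,\Psi\rb+\int_0^t e^{-m(t-s)}\lb\Psi(\cdot+cs),dW(s)\rb
\]
from the proof of Lemma \ref{lem:conv}, I obtain uniform convergence on $[\delta,T]$ for every $\delta>0$. Near $t=0$ the crude bound $|C_0^m(t)|\le |\lb\eta,\Psi\rb|+\sup_{s\le\delta}\bigl|\int_0^s\lb\Psi(\cdot+cr),dW(r)\rb\bigr|$, combined with $|\lb\eta,\Psi\rb|=|C_0(0^+)|<\eps^{-q}$ on the good event and the path-continuity of the driving martingale at zero, controls $\sup_{t\in[0,T]}|C_0^m(t)|$ strictly below $\eps^{-q}$ uniformly in $m$. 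Theorem \ref{thm:firstorder} then furnishes the decomposition $v(t)=\hat{v}(\cdot+ct+\eps C_0^m(t))+\eps u_0^m(t)+\eps r^m(t)$ with $\Vert r^m\Vert_T\le\alpha(T)\eps^{1-2q}$.

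Setting $r(t):=\tfrac{1}{\eps}\bigl(v(t)-\hat{v}(\cdot+ct+\eps C_0(t))\bigr)-u_0(t)$ I decompose
\[
r^m(t)-r(t)=\tfrac{1}{\eps}\bigl(\hat{v}(\cdot+ct+\eps C_0(t))-\hat{v}(\cdot+ct+\eps C_0^m(t))\bigr)+u_0(t)-u_0^m(t).
\]
A Taylor expansion bounds the first summand in $H^1(1+\rho_t)$ by $|C_0(t)-C_0^m(t)|\cdot\Vert\hat{v}_x\Vert_{H^1(1+\rho)}$ up to the exponential translation factor from Remark \ref{lem:rho}, and Lemma \ref{lem:conv} then forces $r^m\to r$ uniformly on $[\delta,T]$. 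Passing to the limit in $\Vert r^m\Vert_T\le\alpha(T)\eps^{1-2q}$ yields the claimed bound on $(0,T]$, and at $t=0$ it holds trivially since $r(0)=0$. The probability estimate reduces by a union bound to $P[\tau_{q,\eps}<T]\to 0$, which is obtained exactly as at the end of the proof of Theorem \ref{thm:firstorder} (evaluate the decomposition at $\tau_{q,\eps}$, then invoke Markov), together with $P[\tau_{q,\eps}^\infty<T]\le\eps^{2q}E[|C_0|_T^2]\to 0$, where $E[|C_0|_T^2]<\infty$ by Doob applied to the martingale defining $C_0$.

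I expect the main obstacle to be the right-discontinuity at $t=0$ of the limit processes ($C_0(0^+)=\lb\eta,\Psi\rb$ while $C_0(0)=0$, and analogously $u_0(0^+)=\Pi_0\eta$ while $u_0(0)=\eta$), which is the trace of the infinitely fast initial relaxation at $m=\infty$. This confines the convergence in Lemma \ref{lem:conv} to $[\delta,T]$, so transferring the $m$-uniform bound on $\Vert r^m\Vert_T$ up to all of $[0,T]$ requires combining that convergence with $\delta$-independent control of $C_0^m$ near zero, exploiting $r(0)=0$ and the almost sure continuity at the origin of the martingale $t\mapsto\lb\Psi(\cdot+ct),W(t)\rb$.
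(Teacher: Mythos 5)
Your proposal is correct and follows essentially the same route as the paper: on the good event one shows $\tau_{q,\eps}\wedge\tau_{q,\eps}^m=T$ for all large $m$ (via uniform convergence of $C_0^m$ to $C_0$ on $[\delta,T]$ from Lemma \ref{lem:conv} plus an $m$-uniform bound near $t=0$ coming from $\vert\lb\eta,\Psi\rb\vert<\eps^{-q}$ and path-continuity of the Wiener integral at the origin), then applies Theorem \ref{thm:firstorder} and passes to the limit in the three-term decomposition of $r^m-r$, with the probability claim following from a union bound exactly as in Theorem \ref{thm:firstorder}. Your explicit treatment of the right-discontinuity of $C_0$ and $u_0$ at $t=0$ and of the Doob/Markov step for $P[\tau_{q,\eps}^\infty<T]$ only spells out what the paper leaves implicit.
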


\begin{proof}
Let $t<\tau_{q,\eps}\wedge \tau_{q,\eps}^\infty$. Performing an integration by parts in \eqref{c0m} we obtain
\begin{align*}
C_0^m(t)&= (1-e^{-mt})\lb \eta, \Psi\rb
+ \int_0^t m e^{-m(t-s)}\lb \Psi(\cdot+cs, W(s)\rb \, ds\\
&\quad -c\int_0^t (1-e^{-m(t-s)})\lb \Psi_x(\cdot+cs), W(s)\rb \, ds
\end{align*}
which yields for $0<\delta<t$
\[
\vert C_0^m(t)\vert_\delta \leq \vert \lb \eta, \Psi\rb\vert + \Vert \Psi\Vert \, \Vert W\Vert_\delta + c\delta \Vert \Psi_x\Vert \, \Vert W\Vert_\delta \underset{\delta\to 0}{\longrightarrow} \vert \lb \eta, \Psi\rb\vert
\]

Since $\vert C_0(t)\vert \approx \vert  \lb \eta, \Psi\rb\vert$ for $t$ close to $0$, we know that $\vert C_0^m(t)\vert_\delta < \eps^{-q}$. Furthermore, for any $\delta>0$ 
\[
\sup_{\delta\leq s \leq t} \vert C_0^m(s)\vert \underset{m\to\infty}{\longrightarrow} \sup_{\delta \leq s \leq t} \vert C_0(s)\vert < \eps^{-q}.
\] 
This implies $\{\tau_{q,\eps}\wedge \tau_{q,\eps}^\infty=T\}\subseteq \{\tau_{q,\eps}\wedge \tau_{q,\eps}^m=T\}$ for $m$ sufficiently large. Hence, applying Theorem \ref{thm:firstorder} and Lemma \ref{lem:conv} we obtain
\begin{align*}
&\Vert \eps r(t)\Vert_{H^1(1+\rho_t)} = \Vert v(t)-\hat{v}(\cdot + ct+\eps C_0(t))-\eps u_0(t)\Vert_{H^1(1+\rho_t)}\\
&\leq \Vert v(t)-\hat{v}(\cdot + ct+\eps C_0^m(t))-\eps u_0^m(t)\Vert_{H^1(1+\rho_t)}\\
&\quad+ \Vert \hat{v}(\cdot + ct+\eps C_0^m(t))-\hat{v}(\cdot + ct+\eps C_0(t))\Vert_{H^1(1+\rho_t)}
+\eps \Vert u_0^m(t)-u_0(t)\Vert_{H^1(1+\rho_t)}\\
&\leq \alpha(T)\, \eps^{2-2q} + \eps \vert C_0^m(t)-C_0(t)\vert \, \Vert \hat{v}_x\Vert_{H^1(1+\rho)}(1\vee e^{M\eps^{1-q}})^{\frac 12}
+\eps \Vert u_0^m(t)-u_0(t)\Vert_{H^1(1+\rho_t)}\\
&\underset{m\to\infty}{\longrightarrow}\alpha(T) \eps^{2-2q} \quad \text{a.s.}
\end{align*}
In the limit $\eps\to 0$ the above order estimate holds for almost every path $\omega\in \Omega$, i.e.
\begin{align*}
P[\tau_{q,\eps}\wedge \tau_{q,\eps}^\infty=T] \geq
1-P[\tau_{q,\eps}<T]-P[\tau_{q,\eps}^\infty<T] \underset{\eps\to 0}{\longrightarrow} 1
\end{align*} 
with analogous arguments as in Theorem \ref{thm:firstorder}.
\end{proof}

\subsection{Statistical and geometrical properties of first-order phase adaptation and of fluctuations}\label{sec:prop}
We would like to compare our stochastic phase adaptation $C_0$ to the phase description obtained in \cite{BrWe}, where the phenomenon of stochastic wave propagation has been (formally) investigated for (nonlocal) stochastic neural field equations. They concluded that to first order of the noise strength the stochastic perturbation of the phase is a Brownian motion.
Taking a look at the variance of the immediate phase adaptation we see that for $t>0$
\[
\operatorname{Var}(C_0(t)) =\operatorname{Var} \left(\int_0^t \lb \Psi(\cdot+cs), dW(s)\rb \right) = \int_0^t \lb \Psi(\cdot+cs), Q\Psi(\cdot+cs)\rb \, ds \approx \lb \Psi, Q\Psi\rb \, t
\]
showing that $C_0$ is roughly diffusive if $Q$ is ``almost'' translation invariant. Note that strict translation invariance is excluded since $Q$ is of finite trace. Thus, the above statement can only be an heuristic approximative description.
Our first-order fluctuations are indeed orthogonal to the direction of movement of the wave: For $t>0$
\begin{align}\label{eq:ortho}\notag
&\lb u_0(t), \hat{v}_x(\cdot+ct)\rb_{\rho_t} = 
\lb P_{t,0}\Pi_0\eta, \Psi(\cdot+ct)\rb+ \lb \int_0^t P_{t,s}\Pi_s dW(s), \Psi(\cdot+ct)\rb\\
& =
\lb \Pi_0\eta, P_{t,0}^*\Psi(\cdot+ct)\rb + \int_0^t \lb P_{t,s}^* \Psi(\cdot+ct), \Pi_s dW(s)\rb \\ \notag
&= \lb \Pi_0\eta, \Psi \rb + \int_0^t \lb \Psi(\cdot+cs), \Pi_s dW(s) \rb = 0\, . 
\end{align}  
Likewise, in the frozen wave setting $u_0^\#$ is orthogonal to $\hat{v}_x$ in $L^2(\rho)$. As stated in Subsection \ref{sec:frozen} the frozen wave operator $\lf$ generates a contraction semigroup on $\hat{v}_x^\bot$, which allows for the mild solution representation
\begin{align*}
u_0^\#(t)= P_t^\#\Pi_0\eta + \int_0^t P_{t-s}^\#\Phi_s\Pi_s dW(s)
= P_t^\#\Pi_0\eta + \int_0^t P_{t-s}^\#\Pi_0\Phi_s dW(s)\, . 
\end{align*}
Using the contraction property \eqref{eq:contraction} yields
\begin{align*}
\Vert u_0(t)\Vert_{\rho_t} = \Vert u_0^\#(t)\Vert_\rho \leq 
e^{-\kappa t}\Vert \eta\Vert_\rho + \Big\Vert \int_0^t P_{t-s}^\# \Pi_0\Phi_s dW(s)\Big\Vert_\rho\, . 
\end{align*}
This allows us to bound the expectation by
\begin{align*}
E\left[\Vert u_0(t)\Vert_{\rho_t}^2\right]\leq 2 e^{-2\kappa t} \Vert \eta\Vert_\rho + 2 \int_0^t \Vert P_{t-s}^\# \Pi_0 \Phi_s \sqrt{Q} \Vert_{L_2(L^2(1+\rho), L^2(\rho))}^2 \, ds.
\end{align*}

\noindent For a given orthonormal basis $(e_k)_{k\geq 1}$ of $L^2(1+\rho)$ we expand
\begin{align*}
&\Vert P_{t-s}^\# \Pi_0 \Phi_s \sqrt{Q} \Vert_{L_2(L^2(1+\rho), L^2(\rho))}^2 = \sum_k \Vert P_{t-s}^\# \Pi_0 \Phi_s \sqrt{Q}e_k \Vert_{L^2(\rho)}^2 \leq e^{-2 \kappa (t-s)} \sum_k \Vert \sqrt{Q} e_k \Vert_{\rho_s}^2\\ 
& \leq e^{-2 \kappa (t-s)} \Vert \sqrt{Q}\Vert_{L_2(L^2(1+\rho), L^2(\rho))}^2.
\end{align*}
Thus, we obtain the (asymptotic) second moment estimate
\begin{align*}
E\left[\Vert u_0^\#(t)\Vert_{\rho}^2\right]\leq 2 e^{-2 \kappa t} \Vert \eta \Vert_\rho + 2 \frac{\Vert \sqrt{Q}\Vert_{L_2(L^2(1+\rho), L^2(\rho))}^2}{2 \kappa}(1-e^{-2 \kappa t})\underset{t\to \infty}{\longrightarrow}  \frac{\Vert \sqrt{Q}\Vert_{L_2(L^2(1+\rho), L^2(\rho))}^2}{\kappa}.
\end{align*}

\subsection{Minimisation}
It is still open to verify that the above choice of $C_0$ indeed realises the declared objective of minimising the distance between the stochastic wave $v$ and all possible translations of the deterministic profile $\hat{v}$, thus offering an apt description for a stochastic phase. Since all relevant dynamics have been considered on a scale of order $\eps$, it is natural to also investigate the minimisation property on this scale.
\begin{proposition}
For $t<\tau_{q,\eps}\wedge \tau_{q,\eps}^\infty$ the function $a\mapsto \Vert v-\hat{v}(\cdot+ct+\eps a)\Vert_{\rho_t}$
is locally minimal to order $\eps$ at $a=C_0(t)$. 
\end{proposition}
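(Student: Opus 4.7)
The plan is to verify that $a = C_0(t)$ is a critical point of the squared-distance functional
\[
F(a) := \|v(t) - \hat{v}(\cdot + ct + \eps a)\|_{\rho_t}^2
\]
up to an error strictly smaller than $\eps$, and that its second derivative is strictly positive of order $\eps^2$. Together, these two facts yield local minimality of $F$ (equivalently of its square root) on the scale $\eps$. I would work with $F$ rather than the norm since the two functionals share the same critical points whenever $F>0$, which is the case here as $F(C_0(t))$ is of order $\eps^2$.

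The first step is the chain rule,
\[
F'(a) = -2\eps\,\langle v(t) - \hat{v}(\cdot + ct + \eps a),\, \hat{v}_x(\cdot + ct + \eps a)\rangle_{\rho_t},
\]
combined with the decomposition from Theorem \ref{thm:limit}, $v(t) - \hat{v}(\cdot + \gamma_0(t)) = \eps u_0(t) + \eps r(t)$ where $\gamma_0(t) = ct + \eps C_0(t)$ and $\|r\|_T \leq \alpha(T)\eps^{1-2q}$, giving
\[
F'(C_0(t)) = -2\eps^2\,\langle u_0(t) + r(t),\, \hat{v}_x(\cdot + \gamma_0(t))\rangle_{\rho_t}.
\]
The two inner products are then estimated separately. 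For the $u_0$-term the key input is the exact orthogonality \eqref{eq:ortho}, $\langle u_0(t), \hat{v}_x(\cdot+ct)\rangle_{\rho_t} = 0$, together with a first-order Taylor expansion $\hat{v}_x(\cdot+\gamma_0(t)) - \hat{v}_x(\cdot+ct) = \eps C_0(t)\int_0^1 \hat{v}_{xx}(\cdot+ct+s\eps C_0(t))\,ds$; using $|C_0(t)|\leq \eps^{-q}$ on the considered event and Remark \ref{lem:rho}(ii) to absorb the small shift in the weight, this inner product is of order $\eps^{1-q}$. For the $r$-term, Cauchy--Schwarz combined with the bound from Theorem \ref{thm:limit} yields order $\eps^{1-2q}$. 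Consequently $F'(C_0(t)) = O(\eps^{3-2q}) = o(\eps)$ since $q<\tfrac12$.

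The second derivative is
\[
F''(C_0(t)) = 2\eps^2\,\|\hat{v}_x(\cdot+\gamma_0(t))\|_{\rho_t}^2 - 2\eps^3\,\langle u_0(t)+r(t),\, \hat{v}_{xx}(\cdot+\gamma_0(t))\rangle_{\rho_t}.
\]
The leading term is strictly positive and of order $\eps^2$ (by Assumption \ref{ass:eigenfct}, plus Remark \ref{lem:rho}(ii) to handle the $\eps^{1-q}$-shift in the weight), while the correction is of order $\eps^3$. A second-order Taylor expansion of $F$ around $C_0(t)$ then shows that for any $h$ whose modulus exceeds the threshold $\eps^{1-2q}$, the positive quadratic term dominates and $F(C_0(t)+h) > F(C_0(t))$, which is the claimed local minimality to order $\eps$.

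The main obstacle is the controlled use of the orthogonality \eqref{eq:ortho}, which holds only for the unshifted argument $\hat{v}_x(\cdot+ct)$: bridging to $\hat{v}_x(\cdot+\gamma_0(t))$ requires a careful Taylor remainder argument whose success is dictated by the interplay of the two stopping-time constraints $|C_0(t)|\leq \eps^{-q}$ and $\|u(t)\|_{H^1(1+\rho_t)}\leq \eps^{1-q}$ with the remainder bound $\|r\|_T\leq\alpha(T)\eps^{1-2q}$. Exactly these are the ingredients that make the condition $q<\tfrac12$ the natural threshold under which the minimality conclusion holds to the desired order.
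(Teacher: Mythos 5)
Your proposal is correct and follows essentially the same route as the paper: differentiate the squared $\rho_t$-distance, insert the decomposition of Theorem \ref{thm:limit}, kill the leading term via the orthogonality \eqref{eq:ortho}, bridge the shift from $\hat{v}_x(\cdot+ct)$ to $\hat{v}_x(\cdot+\gamma_0(t))$ by Taylor expansion together with Remark \ref{lem:rho}, and observe that the second derivative is $2\eps^2\Vert\hat{v}_x(\cdot+ct)\Vert_{\rho_t}^2+o(\eps^2)>0$. Your bound $O(\eps^{3-2q})$ on the first derivative is in fact $o(\eps^2)$ (not merely $o(\eps)$), which is exactly the order needed to be dominated by the quadratic term, so the argument closes as in the paper.
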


\begin{proof}
Applying Theorem \ref{thm:limit} we obtain
\begin{align*}
&\frac 12 \, \frac{d}{da}\Big\vert_{a=C_0(t)} \Vert v(t)-\hat{v}(\cdot+ct+\eps a)\Vert_{\rho_t}^2 = - \eps^2 \lb u_0(t)+r(t), \hat{v}_x(\cdot+\gamma_0(t))\rb_{\rho_t}\\
&=-\eps^2\big( \lb \hat{v}_x(\cdot+ct), u_0(t)\rb_{\rho_t} + \lb \hat{v}_x(\cdot+ct+\eps C_0(t))-\hat{v}_x(\cdot+ct), u_0(t)\rb_{\rho_t}+
\lb \hat{v}_x(\cdot +ct), r(t)\rb_{\rho_t}\\
&\quad + \lb \hat{v}_x(\cdot+ct+\eps C_0(t))-\hat{v}_x(\cdot+ct), r(t)\rb_{\rho_t}\big)
= o(\eps^2)\, . 
\end{align*}
Here we used that $\lb \hat{v}_x(\cdot+ct), u_0(t)\rb_{\rho_t}=0$ as shown in \eqref{eq:ortho}. For the second derivative we obtain
\begin{align*}
\frac 12 \, \frac{d^2}{da^2}\Big\vert_{a=C_0(t)} \Vert v(t)-\hat{v}(\cdot+ct+\eps a)\Vert_{\rho_t}^2 = \eps^2 \Vert \hat{v}_x(\cdot+ct)\Vert_{\rho_t}^2 + o(\eps^2) >0\, . 
\end{align*}
\end{proof}

\medskip 
\textbf{Acknowledgement:} We would like to thank Mark Veraar for fruitful discussions during an early stage of this work. 

\nocite{ErmTer}
\nocite{Lang}

\bibliographystyle{ams-pln}

\bibliography{literature}

\end{document}